\def\Fig1{\begin{figure}[htb]\begin{center}
\includegraphics[width=6.5cm]{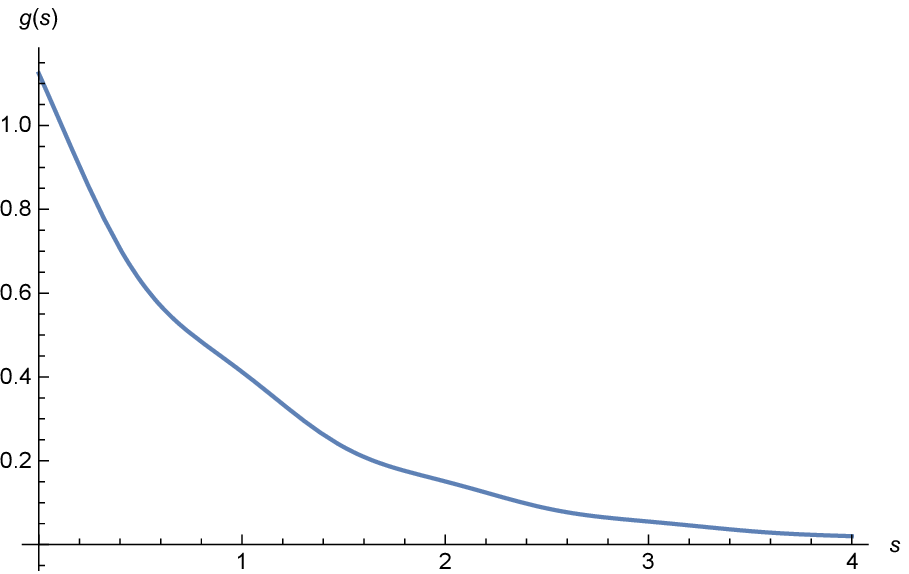}\qquad\,\,\includegraphics[width=6.5cm]{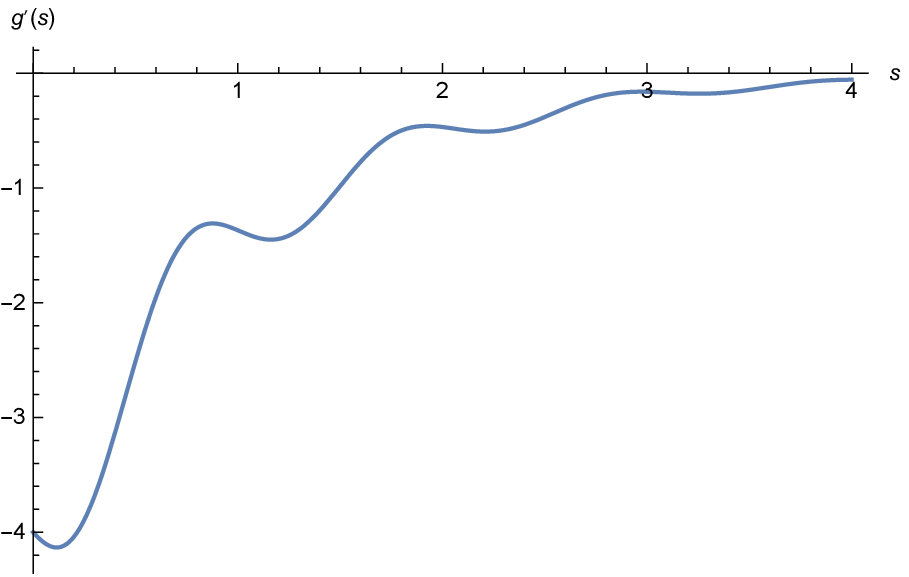}\\
{\tiny fig.\ $\!$1$\,\,$ Portraits of $g(s)$ and $g'(s)$}
\end{center}
\end{figure}}
\newtheorem{proposition}{Proposition}[section]
\newtheorem{theorem}[proposition]{Theorem}
\newtheorem{lemma}[proposition]{Lemma}
\theoremstyle{definition}
\newtheorem{definition}[proposition]{Definition}
\newtheorem{remark}[proposition]{Remark}
\newtheorem{example}[proposition]{Example}
\numberwithin{equation}{section}
\def\R {\mathbb{R}}
\def\e{{\rm e}}
\def\H {{\mathcal H}}
\def\q {{\mathsf q}}
\def\Q {{\mathsf Q}}
\def\C {{\mathcal C}}
\def\D {{\mathfrak D}}
\def\E {{\mathsf E}}
\def\F {{\mathsf F}}
\def\T {{\mathbb T}}
\def\I {{\mathbb I}}
\def\S {{\mathbb S}}
\def\M {{\mathcal M}}
\def \l {\langle}
\def \r {\rangle}
\def \au {\rm}
\def \ti {\it}
\def \jou {\rm}
\def \bk {\it}
\def \no#1#2#3 {{\bf #1} (#3), #2.}
\def \eds#1#2#3 {#1, #2, #3.}
\begin{document}

\title[MGT equation with memory with nonconvex kernels]
{On the Moore-Gibson-Thompson equation\\ with memory with nonconvex kernels}

\author[M. Conti, L. Liverani and V. Pata]{Monica Conti, Lorenzo Liverani and Vittorino Pata}

\address{Politecnico di Milano - Dipartimento di Matematica
\newline\indent
Via Bonardi 9, 20133 Milano, Italy}
\email{monica.conti@polimi.it {\rm (M. Conti)}}
\email{lorenzo.liverani@polimi.it {\rm (L. Liverani)}}
\email{vittorino.pata@polimi.it {\rm (V. Pata)}}

\subjclass[2010]{35B35, 35G05, 45D05}
\keywords{MGT equation with memory, nonconvex memory kernel, existence and uniqueness of solutions,
exponential decay of the energy}

\begin{abstract}
We consider the MGT equation with memory
$$\partial_{ttt} u + \alpha \partial_{tt} u - \beta \Delta \partial_{t} u
- \gamma\Delta u + \int_{0}^{t}g(s) \Delta u(t-s) ds = 0.$$
We prove an existence and uniqueness result removing the convexity assumption on
the convolution kernel $g$,
usually adopted in the literature.
In the subcritical case $\alpha\beta>\gamma$,
we establish the exponential decay of the energy, without leaning on
the classical differential
inequality involving $g$ and its derivative $g'$, namely,
$$g'+\delta g\leq 0,\quad\delta>0,$$
but only asking that $g$ vanishes exponentially fast.
\end{abstract}

\maketitle

\section{Preamble: The MGT Equation}

\noindent
Given a smooth bounded domain $\Omega\subset\R^3$,
the Moore-Gibson-Thompson (MGT) equation
is the third-order PDE
\begin{equation}
\label{MGT}
\partial_{ttt} u + \alpha \partial_{tt} u-\beta \Delta \partial_t u - \gamma \Delta u =0,
\end{equation}
ruling the evolution of the unknown variable
$u=u(\boldsymbol{x},t):\Omega\times [0,\infty) \to \R$.
Here, $-\Delta$ is the Laplace-Dirichlet operator, while
$\alpha,\beta,\gamma>0$ are fixed structural parameters.

\smallskip From the physical viewpoint,
\eqref{MGT} is a wave-type equation arising
in the context of acoustic wave propagation
with the so-called second sound, being $u$
the acoustic pressure,
where the paradox of the infinite speed of propagation
is eliminated by replacing the Fourier law by the Maxwell-Cattaneo one. Such a feature
explains the presence of the third-order derivative in time.
The model equation first appeared in a very old paper of Stokes \cite{STO},
but it has received a considerable attention only in recent years, where a number of authors devised
many possible applications in nonlinear acoustic and in thermal relaxation in viscous
gases and fluids (see, e.g., \cite{PJ,KAL,MG,TOM}).

\smallskip
The mathematical analysis of \eqref{MGT} is very intriguing, and it has been carried out
in several papers (see \cite{BU1,BU2,DPMGT,KL,KLM,KLP,KN,LAS,MDT}).
For all values of the positive parameters
$\alpha,\beta,\gamma$, the (linear) equation turns out to generate a strongly continuous semigroup of solutions
on the phase space
$$\H= H_0^1(\Omega)\times  H_0^1(\Omega)\times L^2(\Omega).$$
Here, $L^2(\Omega)$ is the Lebesgue space of square summable functions on $\Omega$, whereas $H_0^1(\Omega)$
is the Sobolev space of square summable functions, along with their derivatives, with null trace on the
boundary $\partial \Omega$. However, the asymptotic behavior of the solutions is strongly influenced by the
\emph{stability number}
$$\varkappa = \beta - \frac{\gamma }{\alpha}.$$
Indeed, the picture is the following:
\begin{itemize}
\item[$\bullet$] If $\varkappa>0$ the solutions decay exponentially fast.
\smallskip
\item[$\bullet$] If $\varkappa=0$ the energy of the system is conserved.
\smallskip
\item[$\bullet$] If $\varkappa<0$ there are solutions with an exponential blow up.
\end{itemize}
For this reason, the regimes $\varkappa>0$, $\varkappa=0$ and $\varkappa<0$ are usually referred to
in the literature as
\emph{subcritical}, \emph{critical} and \emph{supercritical}, respectively.
Quite interestingly, in the subcritical case $\varkappa>0$, equation~\eqref{MGT} serves also
as a model for the description of linearly viscoelastic solids (see \cite{DDR,DPMGT}).

\section{Introduction}

\noindent
Lately, the following integral relaxation
of the MGT equation~\eqref{MGT} has been proposed:
$$
\partial_{ttt} u(t) + \alpha \partial_{tt} u(t) - \beta \Delta \partial_t u(t) -
\gamma\Delta u(t) + \int_{0}^{t}g(s) \Delta w(t-s) ds = 0,
$$
where $w$ is a linear combination of $u$ and $\partial_t u$,
and $g$ is a positive decreasing kernel.
The convolution term
has been added in order to take into account molecular relaxation phenomena,
which introduce a sort of delay in the dynamics, producing nonlocal effects in time
\cite{LEB,OST}.
Loosely speaking, the outcome is that at any time $t$ the system keeps memory of
(and it is influenced by) its past evolution.
Depending on the physical properties of the medium and the environment,
the variable $w(t)$ can fall into one of the following types:
$$
w(t) =
\begin{cases}
u(t), \\
\partial_t u(t),\\
\lambda u(t) + \partial_t u(t),\quad \lambda>0.
\end{cases}
$$
The three situations above are usually named memory of type I, type II and type III, respectively.
For the cases of memory of type II and III, we address the reader to the works \cite{ACJR,DLPII,LW1}.

In the present paper, we are interested to the case of memory of type~I, which is in a sense the most natural,
and certainly the one that has received greater attention.
Accordingly, our equation reads
\begin{equation}
\label{volDelta}
\partial_{ttt} u(t) + \alpha \partial_{tt} u(t) - \beta \Delta \partial_t u(t) -
\gamma\Delta u(t) + \int_{0}^{t}g(s) \Delta u(t-s) ds = 0.
\end{equation}
The two main issues concerning \eqref{volDelta} are:
\begin{enumerate}
\item[$\bullet$] Existence and uniqueness of weak solutions in the phase space $\H$.
\smallskip
\item[$\bullet$] Uniform decay properties of the associated natural energy, given by\footnote{Here,
$\|\cdot\|$ denotes the usual norm in $L^2(\Omega)$ or in $[L^2(\Omega)]^3$.}
$$
\E(t)=\|\nabla u(t)\|^2+\|\nabla\partial_t u(t)\|^2
+\|\partial_{tt}u(t) \|^2 + \int_0^t -g'(s) \|\nabla u(t)-\nabla u(t-s)\|^2 ds.
$$
\end{enumerate}
Evidently, suitable hypotheses on the kernel $g$ have to be made.
The basic assumptions on $g$, employed with no exception in the literature,
are:
$g\in\C^1(\R^+)$ with $g\geq 0$ and $g'\leq 0$, both summable. Besides,
the total mass of $g$ is required to be strictly less than $\gamma$.

Within the further assumption that $g$ is convex, existence and uniqueness of solutions
is proved in \cite{LW1} for the subcritical case $\varkappa> 0$, and in
\cite{ACJR} for the critical case $\varkappa= 0$ (but asking also~\eqref{Dafermos}
below).
Actually, by means of a ``pumping" technique introduced in \cite{DPMGT},
it is not hard to extend the result
for all negative values of the stability number $\varkappa$, hence covering
the supercritical case as well.
In fact, the convexity of $g$ plays a crucial role in the well-posedness
analysis.
One exception is the paper \cite{LW2} (see also \cite{CNS1}), where the convexity of $g$ is not required, but it
is replaced by the rather restrictive condition
$$g(0)<\alpha^2\varkappa,$$
which clearly can take place only in the subcritical regime $\varkappa>0$.

As far as the uniform decay of the energy is concerned, here we are interested to analyze the
exponential case, although other decay patterns are possible (such as polynomial), depending on the form of $g$.
A first observation is that the memory term in \eqref{volDelta} introduces an extra dissipation
with respect to the one of the MGT equation~\eqref{MGT}.
Hence, in principle, one could expect~\eqref{volDelta} to exhibit stronger stability properties
than~\eqref{MGT}. For instance, one could think that the critical regime $\varkappa=0$ is driven into
subcritical when memory is added. This is utterly false
(unless  $-\Delta$ is replaced by a \emph{bounded} operator), even in the simplest possible case of the
exponential kernel. A counterexample in that direction is given in~\cite{DLP}.
Accordingly, in order to hope for exponential stability, we shall confine ourselves to the
subcritical case $\varkappa>0$.

The exponential decay at infinity of the energy $\E(t)$ is always proved within the classical assumption
\begin{equation}
\label{Dafermos}
g'(s)+\delta g(s) \leq 0,
\end{equation}
for some $\delta>0$ (see \cite{ACJR,LW2,LW1,CNS2,CNS1}), implying in
particular the exponential decay of the kernel
$$g(s)\leq g(0)\e^{-\delta s}.$$
The differential inequality~\eqref{Dafermos} is to some extent unsatisfactory, for it
imposes severe restrictions on the shape of the kernel, whereas the
exponential decay of $\E(t)$ should be reasonably expected if
$g$ is exponentially decaying solely, with no further information.

\smallskip
In light of the discussion above, the aim of the present paper is twofold:
\begin{enumerate}
\item[$\bullet$] Prove an existence and uniqueness result removing the convexity
assumption on $g$, and requiring in place a much weaker condition: roughly speaking,
the derivative $g'$ cannot be ``too small" where $g$ is concave down.
\smallskip
\item[$\bullet$] Prove the exponential decay of the energy in the subcritical regime $\varkappa>0$, assuming only that
$g$ is controlled from above by a negative exponential.
\end{enumerate}

\begin{remark}
In fact, by applying the trick devised in~\cite{DPMGT}, our proof of the existence and uniqueness
result can be immediately extended to cover all the values of $\varkappa$.
\end{remark}

We finally mention that the idea of studying the exponential decay of the energy
relaxing condition~\eqref{Dafermos} has been first devised
in the paper~\cite{CGP}, in the context of linear viscoelasticity. Nonetheless,
our techniques can be applied to obtain also decay types
other than exponential
(see e.g.\ \cite{LW2,CNS2}),
without assuming a differential inequality on $g$, but only requiring a proper control from above,
along the line of the recent work~\cite{Cippi}.

\subsection*{Plan of the paper}
In the next Section~\ref{SFSN} we stipulate the general assumptions,
defining the proper functional setting for our problem.
The main results are stated in Section~\ref{SSMR}.
The remaining of the paper is devoted to the proofs.
In Section~\ref{SRT} we study
the semigroup of right translations on a suitable memory space.
This object is exploited in Section~\ref{SAP},
in the analysis of a certain family of approximated problems, depending on a
smoothing parameter $\varrho>0$.
The well-posedness of the $\varrho$-problems, as well as their exponential stability,
are discussed in Sections~\ref{SAPEU}, \ref{SAPEI} and \ref{SAPEDE}.
In the final Section~\ref{SAPMR} we perform the limit $\varrho\to0$,
concluding the proofs of the main results.

\section{Functional Setting and Notation}
\label{SFSN}

\subsection*{Geometric spaces} Let $(H,\l\cdot,\cdot\r,\|\cdot\|)$ be a real Hilbert space,
and let $A:H\to H$ be a strictly positive selfadjoint operator
with domain $\D(A)\subset H$.
For $\sigma \in \R$, we define the hierarchy of (continuously) nested Hilbert spaces
$$H^\sigma = \D(A^{\sigma/2}),$$
endowed with the scalar products and norms
$$\l u,v\r_{\sigma}
=\l A^{\sigma/2}u,A^{\sigma/2}v\r\qquad\text{and}\qquad
\|u\|_\sigma=\|A^{\sigma/2}u\|.$$
The index $\sigma$ appearing  in the scalar products and norms will be always omitted whenever zero.
For $\sigma>0$, it is understood that $H^{-\sigma}$
denotes the completion of the domain, so that $H^{-\sigma}$ is the dual space of $H^\sigma$.
The symbol $\l\cdot,\cdot\r$ will also be used to denote the duality pairing between
$H^{-\sigma}$ and $H^\sigma$.
Finally, we introduce the phase space of our problem, namely, the product Hilbert space
$$\H = H^1 \times H^1 \times H,$$
with the usual product norm
$$\|(u,v,w)\|_{\H}^2=\|u\|_{1}^2+\|v\|_1^2+\|w\|^2.$$

\begin{remark}
For the concrete equation \eqref{volDelta} of the previous section, $A$ is the
Laplace-Dirichlet operator $-\Delta$ on
$H=L^2(\Omega)$, while $H^1=H_0^1(\Omega)$ and $H^2=H^2(\Omega)\cap H_0^1(\Omega)$.
\end{remark}

\subsection*{Properties of the memory kernel}
We assume that the memory kernel $g$ is a nonnull function
defined on $\R^+=(0,\infty)$ satisfying the following set of hypotheses:
\begin{enumerate}
\smallskip
\item[{\bf (g1)}] $g$ and its derivative $g'$ are summable and absolutely continuous on $\R^+$.
Besides,
$$\kappa=\int_{0}^{\infty}g(s) ds < \gamma.$$
\item[{\bf (g2)}] $g(s) \geq 0$ and $g'(s) \leq 0$ for every $s >0$.
\medskip
\item[{\bf (g3)}] $\exists\,\delta > 0$ such that for a.e.\ $s>0$
$$(\alpha - \delta)g'(s) - g''(s) \leq 0.$$
Without loss of generality we may (and do) assume $\delta<\alpha$.
\smallskip
\end{enumerate}

\begin{remark}
\label{Remg1}
Condition (g1) implies that
$$g(0)=\lim_{s\to 0}g(s)=\int_0^\infty -g'(s)ds <\infty.$$
\end{remark}

\begin{remark}
The derivative $g'$ can be unbounded about zero.
In addition, condition~(g3) along with the fact that $g'$ is summable yield
$$
\lim_{s \to \infty} g'(s) = 0.
$$
The proof of this limit is a nice real analysis exercise, and is left to the interested reader.
Besides, knowing that $g'$ is differentiable almost everywhere, (g3) is completely
equivalent to require
\begin{equation}
\label{gform}
-g'(s+t)\leq -g'(s)\e^{(\alpha-\delta)t},
\end{equation}
for all $s>0$ and $t\geq 0$. Indeed, one implication follows from the Gronwall lemma,
whereas the converse is proved by performing the limit of the quotient $\frac{g'(s+t)-g'(s)}{t}$
as $t\to 0^+$.
\end{remark}

\begin{example}
\label{exa}
Taking without loss of generality $\alpha-\delta=1$ and $\gamma$ large enough, the memory kernel
$$g(s)=\frac1{37}\e^{-s}\big[148+6\cos 6s +\sin 6s \big]$$
is easily seen to satisfy (g1)-(g3). Indeed,
$$g'(s)-g''(s)=-\e^{-s} \big [ 8+2\sin 6s - 6 \cos 6s \big]\leq 0.
$$
However, its derivative
$$g'(s)=-\e^{-s}\big [4+\sin 6s \big]$$
is oscillating all over $\R^+$. Accordingly, $g(s)$ is not convex, and not even ultimately convex
as $s\to\infty$.
\end{example}

\Fig1

\section{Statement of the Main Results}
\label{SSMR}

\noindent
Let (g1)-(g3) hold. In greater generality,
within the subcritical condition
\begin{equation}
\label{subcritical}
\varkappa = \beta - \frac{\gamma }{\alpha}>0,
\end{equation}
we will consider for $t>0$ the equation
\begin{equation}
\label{volterra}
\partial_{ttt} u(t) + \alpha \partial_{tt} u(t) + \beta A\partial_{t} u(t) + \gamma Au(t) - \int_{0}^{t}g(s) Au(t-s) ds = 0,
\end{equation}
subject to the initial conditions
\begin{equation}
\label{IC}
\begin{cases}
u(0) = u_0, \\
\partial_t u(0) = v_0, \\
\partial_{tt} u(0) = w_0,
\end{cases}
\end{equation}
where $(u_0,v_0,w_0)$ is an arbitrarily given vector of $\H$.

\begin{definition}
\label{solution}
A function $u$ is said to be a weak solution to problem \eqref{volterra}-\eqref{IC}
on the time interval $[0,T]$ if:
\begin{itemize}
\item[(i)] $(u,\partial_t u,\partial_{tt}u) \in \C([0,T],\H)$;
\smallskip
\item[(ii)] the initial conditions \eqref{IC} are satisfied; and
\smallskip
\item[(iii)] for every test function $\zeta\in H^1$ and a.e.\ $t\in[0,T]$,
$$\l \partial_{ttt} u(t), \zeta \r+ \alpha \l  \partial_{tt} u(t), \zeta \r + \beta \l  \partial_{t} u(t), \zeta \r_1
+ \gamma\l  u(t),\zeta \r_1 -\int_{0}^{t} g(s)\l u(t-s), \zeta\r_1 ds = 0.$$
\end{itemize}
We also define the corresponding energy by
\begin{equation}
\label{energy}
\E(t)=\|u(t)\|^2_1+\|\partial_t u(t)\|^2_1
+\|\partial_{tt}u(t) \|^2 + \int_0^t -g'(s) \|u(t)-u(t-s)\|^2_1ds.
\end{equation}
\end{definition}

The first theorem concerns with the well-posedness of the problem.
As previously mentioned in the Introduction, such a result is not obvious at all, due to the lack of convexity
of the memory kernel $g$.

\begin{theorem}
\label{EU}
For every set of initial data $(u_0,v_0,w_0)\in\H$, problem~\eqref{volterra}-\eqref{IC}
admits a unique global solution (i.e., defined for all $T>0$) in the sense of Definition~\ref{solution}.
Moreover,
there exists $M\geq 1$, independent of the initial data, such that
\begin{equation}
\label{BDD}
\E(t)\leq M\E(0).
\end{equation}
\end{theorem}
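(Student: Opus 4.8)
The plan is to recast \eqref{volterra} as an autonomous abstract equation through the Dafermos history approach, and to attack it via a regularization of the memory kernel. First I would introduce the relative history variable $\eta^t(s)=u(t)-u(t-s)$, which obeys the transport equation $\partial_t\eta^t=-\partial_s\eta^t+\partial_t u(t)$ with $\eta^t(0)=0$. Setting $U=(u,\partial_t u,\partial_{tt}u,\eta)$ and writing $\kappa=\int_0^\infty g\,ds$, an integration by parts turns the convolution in \eqref{volterra} into the instantaneous contribution $(\gamma-\kappa)Au(t)$ plus a term $\int_0^\infty g'(s)A\eta^t(s)\,ds$, so the problem becomes $\partial_t U=\A U$ on the extended phase space $\H\times\M$. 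Here $\M$ is the memory space of $H^1$-valued functions with norm $\|\eta\|_\M^2=\int_0^\infty -g'(s)\|\eta(s)\|_1^2\,ds$, chosen precisely so that the last summand of the energy \eqref{energy} equals $\|\eta^t\|_\M^2$; the remaining part of $\E$ is the $\H$-norm.

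The heart of the matter is the component of $\A$ acting on $\eta$, namely the generator $T=-\partial_s$ of right translations on $\M$, analysed in Section~\ref{SRT}. A direct integration by parts, with the boundary terms killed by $\eta(0)=0$, gives $\langle T\eta,\eta\rangle_\M=\tfrac12\int_0^\infty -g''(s)\|\eta(s)\|_1^2\,ds$. In the classical theory the convexity $g''\geq0$ makes this nonpositive, so $T$ is dissipative; this is exactly the hypothesis we drop. The replacement is (g3), equivalently \eqref{gform}: since $-g''\leq(\alpha-\delta)(-g')$ we obtain $\langle T\eta,\eta\rangle_\M\leq\tfrac{\alpha-\delta}{2}\|\eta\|_\M^2$, so that $T$ generates a semigroup of right translations with growth bound controlled by $\alpha-\delta$ instead of a contraction. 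This controlled growth is the crucial structural gain supplied by (g3).

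Two technical obstructions prevent applying generation theory to $\A$ directly: $g'$ may be unbounded near $s=0$, and $-g'$ need not be monotone, so $\M$ and the domain of $T$ are delicate. To bypass this I would regularize, replacing $g$ by a family $g_\varrho$ with bounded smooth derivative still satisfying (g1)--(g3) with constants uniform in $\varrho$, and $g_\varrho\to g$ as $\varrho\to0$ (Section~\ref{SAP}). For each $\varrho>0$ the operator $\A_\varrho$ is, after subtracting a multiple of the identity fixed by $\alpha-\delta$ and by the subcriticality \eqref{subcritical}, m-dissipative for an equivalent inner product on $\H\times\M_\varrho$; maximality reduces to solving a resolvent equation, where the translation semigroup of Section~\ref{SRT} enters. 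By Lumer--Phillips this yields a unique solution $U_\varrho$ of the $\varrho$-problem, the weighted energy being dictated by $\varkappa>0$ exactly as for the bare MGT equation.

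The final step is a uniform energy estimate. Differentiating the $\varrho$-energy along $U_\varrho$, the bad memory contribution $\int_0^t g_\varrho''\|\eta^t\|_1^2\,ds$ is dominated via (g3) by $-(\alpha-\delta)\int_0^t -g_\varrho'\|\eta^t\|_1^2\,ds$, while the MGT part supplies the dissipation guaranteed by $\varkappa>0$. Assembling these contributions I would build a functional equivalent to $\E_\varrho$ that is non-increasing, whence $\E_\varrho(t)\leq M\E_\varrho(0)$ with $M\geq1$ independent of both $t$ and $\varrho$. This uniform bound lets me extract a weakly-$*$ convergent subsequence, and by linearity the limit is a weak solution of \eqref{volterra}-\eqref{IC} in the sense of Definition~\ref{solution}, inheriting \eqref{BDD}; uniqueness follows by testing the difference of two solutions against itself and invoking the same inequality. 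I expect the principal obstacle to be precisely the loss of dissipativity of $T$: converting the mere growth bound from (g3) into a genuine uniform-in-$\varrho$ control of $\E_\varrho$, so that the constant $M$ does not degenerate as $\varrho\to0$.
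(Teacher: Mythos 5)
Your overall architecture (history variable, memory space $\M=L^2_{-g'}(\R^+;H^1)$, the right-translation generator and the role of (g3), a $\varrho$-regularization, a uniform energy functional, then the limit $\varrho\to0$) matches the paper, but the regularization you propose attacks the wrong object, and this is a genuine gap. The obstruction is not the roughness of $g'$: it is that the initial history associated with \eqref{volterra}--\eqref{IC} is the \emph{constant} function $\eta^0(s)\equiv u_0$, which fails the boundary condition $\eta(0)=0$ and hence never belongs to $\D(\T)$, no matter how smooth and bounded you make the kernel. Consequently the identity $\l\T\eta^t,\eta^t\r_\M=-\frac12\int_0^\infty g''(s)\|\eta^t(s)\|_1^2\,ds$ (your key step, and the paper's \eqref{WOW}) is not available along the trajectory: every integration by parts in $s$ leaves a non-vanishing boundary contribution at $s=0$ proportional to $-g_\varrho'(0)\|u_0\|_1^2$, and smoothing $g$ does nothing to kill it. The paper's regularization is instead a modification of the \emph{problem}: one adds the forcing term $\Q_\varrho(t)Au_0$ to the equation, which has the effect of replacing the initial history by $\eta^0(s)=\q_\varrho(s)u_0$, a function vanishing near $s=0$ and hence lying in $\D(\T)$; only then do all boundary terms disappear. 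You would need a cutoff of the history near $s=0$, not of the kernel, for your scheme to close.

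A second, related gap concerns the uniform-in-time bound \eqref{BDD}. From (g3) alone you correctly extract $\l\T\eta,\eta\r_\M\leq\frac{\alpha-\delta}{2}\|\eta\|_\M^2$, i.e., a \emph{growth} bound; Lumer--Phillips applied to $\A_\varrho-c\I$ then yields only $\E_\varrho(t)\leq M\e^{ct}\E_\varrho(0)$, which is not \eqref{BDD}. The positive term $\frac{\alpha-\delta}{2}\|\eta\|_\M^2$ cannot be absorbed by the MGT dissipation coming from $\varkappa>0$, which controls $\|\partial_t u\|_1^2$ but not $\|\eta\|_\M^2$. The cancellation in the paper comes from the specific multiplier $\partial_{tt}u+\alpha\partial_t u$: after the integrations by parts it produces the extra term $\alpha\int_0^\infty g'(s)\|\eta(s)\|_1^2\,ds$, which combined with $-\int_0^\infty g''(s)\|\eta(s)\|_1^2\,ds$ gives $\int_0^\infty[\alpha g'(s)-g''(s)]\|\eta(s)\|_1^2\,ds\leq-\delta\|\eta\|_\M^2$ by (g3), so the functional $\F_\varrho$ is genuinely decreasing. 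You flag this difficulty yourself at the end, but the proposal as written does not supply the mechanism that resolves it. (By comparison, your use of semigroup generation for the full operator where the paper uses a Galerkin scheme for the $\varrho$-problem is a minor difference.)
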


The next result is the exponential decay of the energy. We remark that such a decay is obtained under the sole assumption
that the memory kernel is exponentially decaying as well.

\begin{theorem}
\label{EXP}
Assume that there exist $M_g>0$ and $\omega_g>0$ such that
\begin{equation}
\label{gexp}
g(s)\leq M_g\e^{-\omega_g s}.
\end{equation}
Then there exist $M\geq 1$ and $\omega>0$, both independent of the initial data,
such that the energy fulfills the exponential decay estimate
\begin{equation}
\label{DECAY}
\E(t)\leq M\E(0)\e^{-\omega t}.
\end{equation}
\end{theorem}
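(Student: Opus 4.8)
The plan is to follow the roadmap announced in the plan of the paper: establish the decay first for the regularized $\varrho$-problems, whose solutions are smooth enough to legitimate every integration by parts, and then recover \eqref{DECAY} for \eqref{volterra}-\eqref{IC} by letting $\varrho\to0$, taking care that all the constants produced along the way are independent of $\varrho$. Accordingly, I reason from now on on a sufficiently regular solution, and I abbreviate $\eta^t(s)=u(t)-u(t-s)$, so that the memory term in \eqref{energy} reads $\Pi(t)=\int_0^t -g'(s)\,\|\eta^t(s)\|_1^2\,ds$. The strategy is the classical one: build a Lyapunov functional $\Lambda$ equivalent to $\E$ and show that $\tfrac{d}{dt}\Lambda\leq-\omega\Lambda$ for some $\omega>0$, whence \eqref{DECAY} follows from the Gronwall lemma together with the bound \eqref{BDD} of Theorem~\ref{EU}.

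First I would derive the energy identity by testing \eqref{volterra} against the multiplier $\partial_{tt}u$ (and, for the correctors below, against $\partial_t u$ and $u$), splitting the convolution via $u(t-s)=u(t)-\eta^t(s)$. Differentiating $\Pi$ and integrating by parts in $s$ — the boundary contributions at $s=t$ cancel, and the one at $s=0$ vanishes since $\eta^t(0)=0$ — one is left with
\begin{equation}
\frac{d}{dt}\Pi(t)=2\int_0^t -g'(s)\,\l\eta^t(s),\partial_t u(t)\r_1\,ds+\int_0^t -g''(s)\,\|\eta^t(s)\|_1^2\,ds.
\end{equation}
The last integral is the dangerous one: since $g$ is not convex, $g''$ need not be signed and this term may well increase the energy. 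This is exactly where hypothesis (g3), in the equivalent form \eqref{gform}, enters: rewriting it as $-g''(s)\leq(\alpha-\delta)(-g'(s))$ yields
\begin{equation}
\int_0^t -g''(s)\,\|\eta^t(s)\|_1^2\,ds\leq(\alpha-\delta)\,\Pi(t).
\end{equation}
Thus the memory dissipation is spoiled at most at the rate $\alpha-\delta$, which is strictly below the coefficient $\alpha$ of the strong damping term $\alpha\partial_{tt}u$. The exact matching of these two $\alpha$'s, built into (g3), is precisely what allows the MGT damping to absorb the spoiled memory contribution, leaving a net dissipation of order $-\delta\,\Pi$.

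Next I would introduce the standard correctors. Exploiting the subcritical condition \eqref{subcritical}, the MGT part supplies a genuine dissipation controlling $\|\partial_{tt}u\|^2$; on top of it I would add functionals of the type $\l\partial_{tt}u,\partial_t u\r$ and $\l\partial_t u,u\r_1$ to recover the missing dissipation of $\|\partial_t u\|_1^2$ and $\|u\|_1^2$, together with a memory-compensation functional, morally $-\l\partial_{tt}u+\alpha\partial_t u,\int_0^t -g'(s)\eta^t(s)\,ds\r$, whose derivative produces a negative multiple of $\Pi$ plus terms absorbable by the already available dissipation. Setting $\Lambda=N\E+(\text{small multiples of the correctors})$ with $N$ large and the remaining coefficients small guarantees on the one hand that $\tfrac1M\E\leq\Lambda\leq M\E$, and on the other hand that the indefinite cross terms — in particular the first integral in the formula above and the convolution contributions from the multipliers — are controlled by Cauchy--Schwarz and Young against the dissipative quantities.

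The last and most delicate point is to turn mere decay into exponential decay at a definite rate. The estimates so far control only the part of the memory weighted by $-g'$, whereas the convolution actually appearing in \eqref{volterra} is weighted by $g$ itself, which is not dominated by $-g'$ in general. Here the additional hypothesis \eqref{gexp} is indispensable: the bound $g(s)\leq M_g\e^{-\omega_g s}$, hence $\int_t^\infty g(s)\,ds\leq\frac{M_g}{\omega_g}\e^{-\omega_g t}$, lets me estimate the residual convolution terms and close the inequality $\tfrac{d}{dt}\Lambda\leq-\omega\Lambda$ with $\omega$ independent of the data and of $\varrho$. I expect the main obstacle to lie precisely in the bookkeeping forced by the non-convexity: every step that would classically invoke $g''\geq0$ must be rerouted through \eqref{gform}, and one must verify that the $\delta$-gap between $\alpha-\delta$ and $\alpha$ genuinely survives after all the correctors and the $\varrho$-regularization have been inserted. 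Once the uniform-in-$\varrho$ inequality is secured, the passage $\varrho\to0$ transfers \eqref{DECAY} to the original problem.
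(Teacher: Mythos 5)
Your overall architecture --- regularize, prove a decay estimate uniform in $\varrho$, pass to the limit --- is exactly the paper's, and so is your reading of (g3): the identity for $\tfrac{d}{dt}\Pi$, the bound $\int_0^t -g''(s)\|\eta^t(s)\|_1^2\,ds\leq(\alpha-\delta)\Pi(t)$, and the observation that the $\alpha$ produced by the multiplier $\partial_{tt}u+\alpha\partial_t u$ cancels the $\alpha$ in (g3) leaving a net dissipation $-\delta\Pi$, are precisely the content of Theorem~\ref{BASIC} (there phrased through the memory space $\M$ and the identity $\l\T\eta,\eta\r_\M=-\tfrac12\int_0^\infty g''(s)\|\eta(s)\|_1^2\,ds$, whose rigorous justification when $g'$ blows up at zero is Lemma~\ref{quasidissi} and is the real reason for the $\varrho$-regularization). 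Your correctors differ in detail from the paper's $\Psi_1,\Psi_2$, but they are of the same family.

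The genuine gap is in the closing step. You assert that one can arrange both $\tfrac1M\E\leq\Lambda\leq M\E$ and $\tfrac{d}{dt}\Lambda\leq-\omega\Lambda$, with \eqref{gexp} used to ``estimate the residual convolution terms.'' But any Lyapunov functional whose derivative yields $-\delta\Pi$ and absorbs the $g$-weighted cross terms must carry $\int g(s)\|\eta^t(s)\|_1^2\,ds$ inside it (this is the paper's $\F_\varrho$), and that quantity is \emph{not} bounded by the energy once \eqref{Dafermos} is dropped: under \eqref{gexp} alone its best upper bound is $C\|u(t)\|_1^2+C\Theta(t)$ with $\Theta(t)=\int_0^t\e^{-\omega_g(t-s)}\|u(s)\|_1^2\,ds$, a history integral controlled neither by $\E(t)$ nor by the instantaneous dissipation. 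So the two-sided equivalence of $\Lambda$ with $\E$ and the pointwise inequality $\tfrac{d}{dt}\Lambda\leq-\omega\Lambda$ cannot hold simultaneously, and your Gronwall step does not close as stated. The paper's resolution (Section~\ref{SAPEDE}, Lemma~\ref{Lemmafine}) is the corrector $\Psi_2=\tfrac\alpha2\int_0^\infty G(s)\|\eta^t(s)-u(t)\|_1^2\,ds$ with $G(s)=\int_s^\infty g(y)\,dy$, which is bounded only by $C[\E_\varrho(0)\e^{-\omega_g t}+\Theta(t)]$, followed by an \emph{integrated} Gronwall argument: after multiplying by $\e^{\omega t}$, one exchanges the order of integration in $\int_0^t\e^{\omega s}\Theta(s)\,ds$ and absorbs it into the extra dissipation $-\mu\int_0^t\e^{\omega s}\|u(s)\|_1^2\,ds$, which works precisely when $\omega\leq\mu(\omega_g-\omega)$. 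This Fubini step is where \eqref{gexp} genuinely replaces \eqref{Dafermos}; it is the delicate point of the proof of \eqref{DECAY}, and it is missing from your proposal.
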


\smallskip
The kernel $g$ of Example~\ref{exa}, although not convex, fulfills the
condition~\eqref{Dafermos} employed in the earlier literature. Hence, in particular,
it fulfills~\eqref{gexp}, so that Theorem~\ref{EXP} applies.
Nonetheless, it is possible to exhibit a nonconvex $g$ satisfying (g1)-(g3) and \eqref{gexp},
but not~\eqref{Dafermos}, as the following construction shows.

\begin{example}
Without loss of generality, condition (g3) is here assumed with $\alpha-\delta=1$.
Besides, we take $\gamma\geq 1$.
Let $f$ be an absolutely continuous positive function satisfying
the following properties:
\begin{itemize}
\item[(i)] $0<f(s)\leq \e^{-s}$.
\smallskip
\item[(ii)] $f(s)=\varepsilon_n+\varepsilon_n(s-n^2)$ on any interval $I_n=[n^2,n^2+n+1]$ with $n\geq 2$ integer,
where $\varepsilon_n=\frac{\e^{-(n^2+n+1)}}{n+2}$.
\smallskip
\item[(iii)] $f$ is decreasing outside the intervals $I_n$.
\end{itemize}
Then, define
$$g(s)=\int_s^\infty f(y)dy.$$
Observe that $g$ is not convex, since its derivative $g'=-f$ is decreasing on $I_n$.
It is also clear that (g1)-(g2) hold, and the same can be said
of \eqref{gexp}, due to (i). Condition (g3) is trivially verified outside $I_n$,
where $g'$ is negative and increasing, while on $I_n$ we have
$$g'(s) - g''(s) \leq 0=-\big[\varepsilon_n+\varepsilon_n(s-n^2)\big]+\varepsilon_n
\leq 0.$$
We are left to show that \eqref{Dafermos} fails to hold.
By contradiction, suppose \eqref{Dafermos} true on every interval $[n^2,n^2+1]\subset I_n$.
In which case, for any $\tau\in[0,1]$, we have
$$g'(n^2+\tau)+\delta g(n^2+\tau)
=-\varepsilon_n(1+\tau) +\delta\int_{n^2+\tau}^\infty f(y)dy\leq 0.$$
But
$$\int_{n^2+\tau}^\infty f(y)dy\geq \int_{n^2+1}^{n^2+n+1} f(y)dy
\geq n\varepsilon_n.
$$
We conclude that
$$(\delta n-2)\varepsilon_n\leq -\varepsilon_n(1+\tau)+\delta n \varepsilon_n\leq 0,
$$
which is clearly false as $n\to\infty$.
\end{example}

\smallskip
The rest of the paper is devoted to the proofs of Theorem~\ref{EU} and Theorem~\ref{EXP}.
In some sense, we may say that the strategy behind the proofs is classical: we use suitable multipliers
in order to obtain differential inequalities.
However, the presence of the convolution integral introduces essential difficulties.
Indeed, when performing the estimates, integration by parts has to be exploited
several times. This produces boundary terms that cannot be possibly handled in any way.
The only possibility is then
working within a regularization scheme, where these boundary terms simply disappear.
Accordingly, we will consider a family of regularized problems, depending on a small parameter $\varrho>0$,
which converge to \eqref{volterra}-\eqref{IC} in the limit $\varrho\to 0$.
In particular, this will allow us to take advantage of some general technical tools developed in the
study of equations with memory in the past history framework.

\section{The Right-Translation Semigroup on the Memory Space}
\label{SRT}

\noindent
Within assumptions (g1)-(g3), we begin to analyze the strongly continuous semigroup of right translations
on a suitable Lebesgue space. This will play a crucial role in the forthcoming proofs.
To this end, let us introduce the space
$$ \M= L^2_{-g'}(\R^+;H^1),$$
namely, the space of square summable functions $\eta:\R^+\to H^1$ with respect to the
measure $-g'(s)ds$, endowed with the Hilbert scalar product and norm
$$\l\eta_1,\eta_2\r_\M=
\int_0^{\infty} -g'(s) \l\eta_1(s),\eta_2(s)\r_1 ds,
\qquad\| \eta \|_{\M}^2 =\int_0^{\infty} -g'(s) \| \eta(s) \|^2_1ds.$$
Next, we define the linear operator
$\T:\M\to\M$ acting as
$$\T\eta=-\eta',$$
with domain
$$\D(\T)=\big\{\eta\in\M:\,\eta'\in\M,\,\,\eta(0)=0\big\},$$
where $\eta'$ denotes the weak derivative of $\eta=\eta(s)$ with respect to $s$,
and $\eta(0)$ stands for the limit in $H^1$ of $\eta(s)$ as $s\to 0$.

Before going any further, let us discover an important feature of $\T$.

\begin{lemma}
\label{quasidissi}
For every $\eta\in\D(\T)$ we have the equality
$$
\l \T\eta, \eta\r_{\M} = -\frac12\int_0^{\infty} g''(s) \| \eta(s) \|^2_1 ds.
$$
In particular, this means that the integral in the right-hand side exists whenever $\eta\in\D(\T)$.
\end{lemma}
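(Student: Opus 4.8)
The plan is to reduce the claimed equality to an integration by parts in the variable $s$ and then to show that both boundary terms vanish, the delicate one being at $s=0$, where $g'$ is allowed to blow up. First I would unravel the definitions. Since $\T\eta=-\eta'$ and the scalar product of $\M$ carries the weight $-g'$,
$$\l\T\eta,\eta\r_\M=\int_0^\infty -g'(s)\l-\eta'(s),\eta(s)\r_1\,ds=\int_0^\infty g'(s)\l\eta'(s),\eta(s)\r_1\,ds,$$
and this quantity is finite, since by Cauchy--Schwarz in $\M$ it is bounded by $\|\T\eta\|_\M\|\eta\|_\M=\|\eta'\|_\M\|\eta\|_\M<\infty$. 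Recognizing $\l\eta'(s),\eta(s)\r_1=\tfrac12\tfrac{d}{ds}\|\eta(s)\|_1^2$ and integrating by parts on a compact subinterval $[\varepsilon,L]\subset\R^+$, where $s\mapsto\|\eta(s)\|_1^2$ is absolutely continuous, yields
$$\int_\varepsilon^L g'(s)\l\eta'(s),\eta(s)\r_1\,ds=\tfrac12 g'(L)\|\eta(L)\|_1^2-\tfrac12 g'(\varepsilon)\|\eta(\varepsilon)\|_1^2-\tfrac12\int_\varepsilon^L g''(s)\|\eta(s)\|_1^2\,ds.$$

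The heart of the matter is to control $g'(\varepsilon)\|\eta(\varepsilon)\|_1^2$ as $\varepsilon\to0$. Here I would exploit condition (g3) in its equivalent integrated form~\eqref{gform}, which gives $-g'(\sigma)\geq -g'(s)\e^{-(\alpha-\delta)(s-\sigma)}$ for $0<\sigma\le s$, whence
$$-g'(s)\int_0^s\frac{d\sigma}{-g'(\sigma)}\leq \frac{\e^{(\alpha-\delta)s}-1}{\alpha-\delta}.$$
Since $\eta(0)=0$, the fundamental theorem of calculus gives $\eta(s)=\int_0^s\eta'(\sigma)\,d\sigma$, and a Cauchy--Schwarz estimate with respect to the measure $-g'(\sigma)\,d\sigma$ bounds $\|\eta(s)\|_1^2$ by $\big(\int_0^s\frac{d\sigma}{-g'(\sigma)}\big)\int_0^s -g'(\sigma)\|\eta'(\sigma)\|_1^2\,d\sigma$. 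Combining the two displays,
$$-g'(s)\|\eta(s)\|_1^2\leq \frac{\e^{(\alpha-\delta)s}-1}{\alpha-\delta}\int_0^s -g'(\sigma)\|\eta'(\sigma)\|_1^2\,d\sigma\longrightarrow 0\qquad(s\to0^+),$$
because the prefactor tends to $0$ and the integral, being that of a summable density over the shrinking interval $(0,s)$, tends to $0$ as well. This makes the boundary term at $0$ disappear.

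Next I would establish that the right-hand integral actually exists. Writing $g''=(\alpha-\delta)g'+\phi$ with $\phi:=g''-(\alpha-\delta)g'\geq0$ by (g3), rearranging the integration-by-parts identity on $[\varepsilon,L]$ and discarding the term $\tfrac12 g'(L)\|\eta(L)\|_1^2\le0$ gives the uniform bound
$$\tfrac12\int_\varepsilon^L \phi(s)\|\eta(s)\|_1^2\,ds\leq -\tfrac12 g'(\varepsilon)\|\eta(\varepsilon)\|_1^2+\tfrac{\alpha-\delta}{2}\|\eta\|_\M^2+\|\eta'\|_\M\|\eta\|_\M,$$
whose right-hand side stays bounded as $\varepsilon\to0$ by the previous step. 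Letting $\varepsilon\to0$ and $L\to\infty$ and invoking monotone convergence shows $\int_0^\infty\phi\|\eta\|_1^2\,ds<\infty$. Since $|g''|\|\eta\|_1^2\leq(\alpha-\delta)(-g')\|\eta\|_1^2+\phi\|\eta\|_1^2$ and $\int_0^\infty(-g')\|\eta\|_1^2\,ds=\|\eta\|_\M^2<\infty$, it follows that $g''\|\eta\|_1^2$ is summable on $\R^+$.

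Finally, setting $F(s)=g'(s)\|\eta(s)\|_1^2$, its a.e.\ derivative $F'=g''\|\eta\|_1^2+2g'\l\eta',\eta\r_1$ is now summable on $\R^+$ (the second summand is summable by Cauchy--Schwarz, as above), so $F$ admits finite limits at $0$ and $\infty$; but $\int_0^\infty|F|\,ds=\|\eta\|_\M^2<\infty$, which forces both limits to be zero. Integrating $F'$ over $\R^+$ therefore yields $\int_0^\infty g''\|\eta\|_1^2\,ds+2\int_0^\infty g'\l\eta',\eta\r_1\,ds=0$, that is, $\l\T\eta,\eta\r_\M=-\tfrac12\int_0^\infty g''\|\eta\|_1^2\,ds$, which is the asserted identity. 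The main obstacle throughout is the possible singularity of $g'$ at the origin, and it is precisely condition (g3) — through the exponential bound~\eqref{gform} — that keeps the boundary contribution at $0$ under control and simultaneously guarantees the existence of the integral on the right-hand side.
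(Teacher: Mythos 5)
Your proof is correct and follows essentially the same route as the paper's: integration by parts on $[\varepsilon,L]$, the equivalent form \eqref{gform} of (g3) to kill the boundary term at the (possibly singular) origin, and the sign of $\phi=g''-(\alpha-\delta)g'$ to obtain convergence of the tail integral and the vanishing of the boundary term at infinity via summability of $g'\|\eta\|_1^2$; the paper merely packages the endpoint-at-zero estimate with an unweighted H\"older inequality and argues the limit at infinity through monotonicity rather than through your function $F$. One cosmetic remark: summability of $F$ near $s=0$ does not by itself force its limit there to vanish, but this is harmless since you had already proved $g'(\varepsilon)\|\eta(\varepsilon)\|_1^2\to0$ directly.
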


\begin{proof}
Since $\eta\in\D(\T)$, the product $\l \T\eta, \eta\r_{\M}$ makes sense.
By definition,
$$
\l \T\eta, \eta \r_{\M}
= \int_0^{\infty} g'(s) \frac{1}{2}\frac{d}{ds}\|\eta(s)\|^2_1 ds
=\lim_{\nu\to 0,\,N\to\infty} \int_\nu^{N} g'(s) \frac{1}{2}\frac{d}{ds}\|\eta(s)\|^2_1 ds.
$$
Therefore, integrating by parts,
$$
\l \T\eta, \eta \r_{\M}=\lim_{\nu\to 0,\,N\to\infty} \frac{1}{2} \bigg[ g'(N) \| \eta(N) \|_1^2-g'(\nu)\|\eta(\nu)\|_1^2
-\int_\nu^{N} g''(s) \| \eta(s) \|_1^2 ds \bigg].
$$
The conclusion is a consequence of the two limits
$$
\lim_{\nu\to 0}g'(\nu)\| \eta(\nu) \|^2_1 =0\qquad\text{and}\qquad
\lim_{N\to\infty} g'(N)\|\eta(N)\|^2_1=0.
$$
To prove the first limit, we use the H\"older inequality and \eqref{gform}, to get
\begin{align*}
-g'(\nu)\| \eta(\nu) \|_1^2
&\leq -g'(\nu)\bigg [ \int_0^\nu \|\eta'(y)\|_1 dy\bigg]^2\\
&\leq \nu \int_0^\nu -g'(y+\nu-y)\|\eta'(y)\|_1^2 dy\\
&\leq \nu \int_0^\nu -g'(y) \e^{(\alpha-\delta)(\nu-y)}\|\eta'(y)\|_1^2 dy\\
\noalign{\vskip1mm}
&\leq \nu \|\T\eta\|_\M^2\e^{(\alpha-\delta)\nu},
\end{align*}
which tends to zero as $\nu\to 0$.
Hence,
$$\l \T\eta, \eta \r_{\M}
=\lim_{N\to\infty} \frac{1}{2} \bigg[ g'(N) \| \eta(N) \|_1^2
-\int_0^{N} g''(s) \| \eta(s) \|_1^2 ds \bigg].$$
Since the integral
$$\int_0^{\infty} g'(s) \| \eta(s) \|_1^2 ds$$
exists,
setting $h(s)=(\alpha-\delta)g'(s)-g''(s)$,
we learn that the limit
$$\lim_{N\to\infty} \bigg[g'(N) \| \eta(N) \|_1^2
+\int_0^{N} h(s) \| \eta(s) \|_1^2 ds \bigg]$$
exists as well.
On the other hand, we know from (g3) that $h(s)\leq 0$. Accordingly,
both terms in the limit above are negative, hence the integral converges, being
monotone decreasing with respect to $N\to\infty$. In turn
$$\lim_{N\to\infty} g'(N) \| \eta(N) \|_1^2$$
must exists.
But the function $g'(s) \| \eta(s) \|_1^2$ is summable on $\R^+$,
which forces the latter limit to be zero, as desired.
\end{proof}

Our aim is showing that $\T$ generates
the \emph{right-translation semigroup} $R(t)$ on $\M$, defined as
$$
[R(t)\eta](s)
=\begin{cases}
0 &s \leq t,\\
\eta(s-t)  &s > t.
\end{cases}
$$
Checking that $R(t)$ is a strongly continuous semigroup of linear operators is standard matter:
the only nontrivial thing to observe is that $R(t)$ maps $\M$ into $\M$. Indeed,
in light of \eqref{gform},
$$\|R(t)\eta\|_\M^2=\int_t^\infty -g'(s)\|\eta(s-t)\|_1^2 ds
=\int_0^\infty -g'(s+t)\|\eta(s)\|_1^2 ds\leq \|\eta\|_\M^2 \e^{(\alpha-\delta)t}.
$$
The characterization of the infinitesimal generator of $R(t)$
is very well known when
the kernel appearing in the definition of $\M$ is a \emph{decreasing} function. In which case, $R(t)$ turns
out to be a contraction semigroup (see, e.g., \cite{Terreni}). However, here the kernel is
$-g'(s)$, which in general is \emph{not} decreasing.
This translates into the fact that $R(t)$ is not a contraction semigroup (hence its infinitesimal generator
is not dissipative).
Nonetheless, the following holds.

\begin{theorem}
\label{thminf}
The infinitesimal generator of the right-translation semigroup $R(t)$
on $\M$ is the linear operator $\T$ defined above.
\end{theorem}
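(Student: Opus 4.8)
The plan is to identify the generator of $R(t)$ with $\T$ by proving the two inclusions between the generator $G$ and $\T$, exploiting the standard semigroup machinery adapted to the fact that $R(t)$ is not a contraction but satisfies the growth bound $\|R(t)\|_{\M}\leq \e^{(\alpha-\delta)t/2}$ established just above. First I would recall that since $R(t)$ is strongly continuous with this exponential bound, its generator $G$ is a densely defined closed operator whose resolvent set contains the half-line $\lambda>\tfrac12(\alpha-\delta)$, with the resolvent given by the Laplace transform
$$
(\lambda-G)^{-1}\eta=\int_0^\infty \e^{-\lambda t}R(t)\eta\,dt.
$$

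The cleanest route is to show the two inclusions $G\subseteq \T$ and $\T\subseteq G$ separately. For $G\subseteq \T$, I would take $\eta\in\D(G)$, so that the strong limit $G\eta=\lim_{t\to0^+}\tfrac1t(R(t)\eta-\eta)$ exists in $\M$. Testing against smooth compactly supported functions and integrating by parts, the difference quotient $\tfrac1t(R(t)\eta-\eta)(s)$ converges in the sense of distributions to $-\eta'(s)$; hence $\eta$ has a weak derivative with $\eta'=-G\eta\in\M$, giving $\eta'\in\M$. The boundary condition $\eta(0)=0$ then follows because every element in the range of $R(t)$ vanishes on $[0,t]$, so $\eta(0)=0$ is forced in the limit; this shows $\eta\in\D(\T)$ and $G\eta=\T\eta$. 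For the reverse inclusion, rather than verifying $\T\subseteq G$ directly on the domain, I would show that $\lambda-\T$ is surjective for some $\lambda>\tfrac12(\alpha-\delta)$, i.e.\ that for every $\phi\in\M$ the equation $\lambda\eta+\eta'=\phi$ with $\eta(0)=0$ has a solution $\eta\in\D(\T)$. The explicit candidate is
$$
\eta(s)=\int_0^s \e^{-\lambda(s-y)}\phi(y)\,dy.
$$
Combined with $G\subseteq\T$ and the fact that $\lambda-G$ is a bijection (since $\lambda$ is in the resolvent set), surjectivity of $\lambda-\T$ forces $\D(\T)\subseteq\D(G)$, whence $G=\T$.

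The main obstacle will be controlling the candidate resolvent $\eta(s)=\int_0^s \e^{-\lambda(s-y)}\phi(y)\,dy$ in the $\M$-norm, precisely because the weight $-g'(s)$ is not monotone; in the classical decreasing-kernel situation this estimate is immediate, but here one must invoke the pointwise bound \eqref{gform}, namely $-g'(s)\leq -g'(y)\e^{(\alpha-\delta)(s-y)}$ for $y\leq s$, to dominate the weight at the outer variable $s$ by the weight at the inner variable $y$. Applying the Cauchy--Schwarz inequality to the convolution kernel $\e^{-\lambda(s-y)}$ and inserting this weight comparison should yield a bound of the form $\|\eta\|_{\M}\leq C(\lambda)\|\phi\|_{\M}$ valid exactly when $\lambda>\tfrac12(\alpha-\delta)$, matching the resolvent half-line and confirming that the growth rate of $R(t)$ is the sharp one dictated by the non-dissipativity of $\T$. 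Verifying that this $\eta$ indeed lies in $\D(\T)$, i.e.\ that $\eta'=\phi-\lambda\eta\in\M$ and $\eta(0)=0$, is then straightforward from the explicit formula.
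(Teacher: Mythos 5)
Your scheme---identify the generator $G$ of $R(t)$ with $\T$ via two inclusions, computing the resolvent of $\T$ explicitly and estimating it through \eqref{gform} and Young's convolution inequality---correctly isolates the same key technical estimate that the paper uses (there it appears as the range condition in Lumer--Phillips). But the way you combine the two inclusions is logically wrong: given $G\subseteq\T$ and the bijectivity of $\lambda-G$, what forces $\D(\T)\subseteq\D(G)$ is the \emph{injectivity} of $\lambda-\T$, not its surjectivity. Indeed, since $\lambda-\T$ extends the surjective map $\lambda-G$, it is automatically onto, so surjectivity carries no information; the only thing that could obstruct equality is a nontrivial kernel of $\lambda-\T$ sitting in $\D(\T)\setminus\D(G)$. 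Injectivity is in fact immediate here ($\lambda\eta+\eta'=0$ with $\eta(0)=0$ forces $\eta=0$), so this is reparable in one line---but once repaired, your entire resolvent computation becomes superfluous, and the whole weight of the argument falls on the inclusion $G\subseteq\T$.

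That inclusion is where the real gap lies. You assert that $\M$-convergence of the difference quotient yields a weak derivative $\eta'=-G\eta\in\M$, and that $\eta(0)=0$ ``is forced in the limit''; neither step is routine for the weight $-g'$, which is not monotone and is not assumed bounded away from zero. Convergence in $\M$ does not by itself give convergence of distributions (elements of $\M$ need not be locally integrable where $-g'$ is small), and from $\eta\in\D(G)$ what you actually extract on $(0,t)$ is $\frac1{t^2}\int_0^t -g'(s)\|\eta(s)\|_1^2\,ds\to0$; converting this into $\lim_{s\to0}\eta(s)=0$ in $H^1$ requires a positive lower bound on $-g'$ near the origin. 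Such a bound does hold, but only as a nontrivial consequence of \eqref{gform} (if $-g'$ accumulated to zero at the origin, \eqref{gform} would propagate this to all of $\R^+$ and force $g\equiv0$), and this is precisely the kind of near-zero control that the paper carries out explicitly in Lemma~\ref{quasidissi} and in the $t\to0$ computation at the end of its proof---you cannot wave it through. The paper's architecture avoids your problematic inclusion altogether: it proves via Lumer--Phillips (dissipativity from Lemma~\ref{quasidissi} and (g3), plus exactly your range computation) that $\T-\omega\I$ with $\omega=\frac{\alpha-\delta}{2}$ generates a contraction semigroup, then establishes only the easy inclusion, namely that the generator of $\e^{-\omega t}R(t)$ \emph{extends} $\T-\omega\I$ (starting from $\eta\in\D(\T)$, where $\eta(0)=0$ is given), and concludes by maximality of generators. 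You should either adopt that structure or supply the missing \eqref{gform}-based justification of $G\subseteq\T$.
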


\begin{proof}
Setting
$$\omega=\frac{\alpha-\delta}2>0,$$
we will equivalently prove
that the linear operator
$$\S=\T-\omega\I,\qquad \D(\S)=\D(\T),$$
is the infinitesimal generator of the semigroup
$$R_*(t)=\e^{-\omega t}R(t).$$
We begin to show that $\S$ generates a contraction semigroup $S(t)$.
This will be done via the Lumer-Phillips theorem (see \cite{PAZ}),
by completing the following two steps.
\begin{itemize}
\item[(i)] $\S$ is dissipative, i.e., $\l\S\eta,\eta\r_\M\leq 0$ for all $\eta\in\D(\S)$.
\smallskip
\item[(ii)] ${\rm Range}(\I-\S)=\M$.
\end{itemize}
Point (i) is an immediate consequence of Lemma~\ref{quasidissi} and (g3).
Indeed, given any $\eta\in\D(\S)$,
$$\l\S\eta,\eta\r_\M=
\frac12\int_0^\infty \big[(\alpha - \delta)g'(s) - g''(s)\big]\|\eta(s)\|_1^2ds\leq 0.
$$
To prove (ii), let $\xi\in\M$ be arbitrarily fixed. We verify that the equation
$$\eta-\S\eta=(1+\omega)\eta+\eta'=\xi$$
has a solution $\eta\in\D(\S)$.
A straightforward integration, along with the requirement $\eta(0)=0$, entail
$$\eta(s) = \int_0^s \e^{-(1+\omega)(s-y)} \xi(y)dy.$$
We need to prove that such an $\eta$ belongs to $\M$. Then,
$\eta'\in\M$ is obtained by comparison, and we conclude that $\eta\in\D(\S)$.
To this aim, an exploitation of \eqref{gform} yields
\begin{align*}
\sqrt{-g'(s)}\|\eta(s)\|_1
&\leq  \int_0^s \e^{-(s-y)} \sqrt{-g'(s)\e^{-2\omega(s-y)}}
\|\xi(y)\|_1dy\\
&\leq  \int_0^s \e^{-(s-y)} \sqrt{-g'(y)}\|\xi(y)\|_1dy.
\end{align*}
Denoting
$${\mathcal E}(s) = \e^{-s}\qquad\text{and}\qquad \phi(s) = \sqrt{-g'(s)}\|\xi(s)\|_1,$$
the latter integral is nothing but the convolution product\footnote{The convolution product $f_1* f_2$
in $\R^+$ can be viewed as the convolution product $\hat f_1*\hat f_2$ in $\R$, where $\hat f_j$
is the null extension of $f_j$ on $\R^-$.} in $\R^+$
$$({\mathcal E} * \phi)(s)=\int_0^s {\mathcal E}(s-y)\phi(y)dy.$$
Since
$$\|{\mathcal E}\|_{L^1(\R^+)} = 1\qquad\text{and}\qquad
\|\phi\|_{L^2(\R^+)}=\|\xi\|_{\M},$$
by a classical result of convolution theory (see, e.g.\ \cite{HS}), we draw the estimate
$$\|\eta\|_\M\leq \|{\mathcal E} * \phi\|_{L^2(\R^+)}\leq
\|{\mathcal E}\|_{L^1(\R^+)}\|\phi\|_{L^2(\R^+)}=\|\xi\|_\M,
$$
allowing us to conclude that $\eta\in\M$.
So far we only proved that $\S$ is the infinitesimal generator
of a contraction semigroup $S(t)$.
In order to prove the equality $S(t)=R_*(t)$,
it is enough showing that
the infinitesimal generator of the semigroup $R_*(t)$, call it $\S_*$,
is an extension of $\S$.
Fix then $\eta\in\D(\S)$. We aim to prove that
$$\exists\,\,\M\text{-}\lim_{t \to 0}\frac{R_*(t)\eta - \eta}{t} \quad(\text{hence equal to }\S_*\eta).$$
Explicitly,
$$\frac{[R_*(t)\eta](s) - \eta(s)}{t}=
\begin{cases}
-\frac{\eta(s)}{t} &s \leq t, \\
\noalign{\vskip1mm}
\frac{\e^{-\omega t}\eta(s-t) - \eta(s)}{t} &s > t.
\end{cases}
$$
By the very definition of $\D(\S)$,
$$\M\text{-}\lim_{t \to 0}
\frac{\e^{-\omega t}\eta(s-t) - \eta(s)}{t}\chi_{(t,\infty)} = -\eta'(s)-\omega\eta(s).
$$
Hence, we are done if we establish the convergence as $t\to 0$
$$-\frac{\eta(s)}{t}\chi_{(0,t)} \to 0 \quad\text{in }\M.
$$
Indeed, making use of the H\"older inequality together with \eqref{gform},
\begin{align*}
\frac{1}{t^2} \int_0^t -g'(s) \|\eta(s)\|^2_1
&\leq \frac{1}{t} \int_0^t -g'(s)\bigg [ \int_0^s \|\eta'(y)\|_1^2 dy\bigg] ds \\
&=\frac{1}{t} \int_0^t \|\eta'(y)\|^2_1\bigg[ \int_y^t -g'(s) ds\bigg] dy \\
&\leq\frac{1}{t} \int_0^t \|\eta'(y)\|^2_1\bigg[ \int_y^t -g'(y)\e^{2\omega(s-y)} ds\bigg] dy \\
&\leq\e^{2\omega t}\int_0^t -g'(y)\|\eta'(y)\|^2_1 dy.
\end{align*}
Since $\eta'\in\M$, the latter term goes to zero when $t\to 0$, as desired.
It is worth noting that, \emph{a posteriori}, we learn that
$R_*(t)=\e^{-\omega t}R(t)$ is a contraction semigroup.
\end{proof}

\subsection*{The nonhomogeneous Cauchy problem}
For a fixed $T>0$, let $f\in L^1(0,T;H^1)$. Consider the nonhomogeneous
Cauchy problem in $\M$
for the unknown variable $\eta^t=\eta^t(s)$ with initial datum $\eta_0=\eta_0(s)\in\M$
$$
\begin{cases}
\displaystyle\frac{d}{dt} \eta^t = \T \eta^t + f(t), &0<t\leq T, \\
\noalign{\vskip1mm}
\eta^0 = \eta_0.
\end{cases}
$$
Note that $f$ can be seen as an element (independent of $s$) of $L^1(0,T;\M)$.
It is then well known that the problem above admits a unique mild solution on $[0,T]$ given by
$$\eta^t=R(t)\eta_0+\int_0^t R(t-\tau)f(\tau) d\tau.$$
Explicitly, we have the representation formula
$$
\eta^t(s) =
\begin{cases}
\displaystyle\int_{t-s}^t f(y)dy &s \leq t,\\
\displaystyle\eta_{0}(s-t) + \int_{0}^t f(y)dy &s > t.
\end{cases}
$$
Moreover, if $f\in \C([0,T],H^1)$ and $\eta_0\in\D(\T)$, then, due to the particular form of the semigroup,
we can conclude that $\eta^t\in \D(\T)$ for every $t\in[0,T]$. Hence, $\eta^t$ is actually
a strong solution (see \cite[Ch.4, Theorem 2.4]{PAZ}).

\section{The Approximated Problem}
\label{SAP}

\noindent
Let (g1)-(g3) and \eqref{subcritical} hold.
For an arbitrarily fixed $\varrho>0$, we set
$$
\q_{\varrho}(s) =
\begin{cases}
0 &\mathrm{if} \ s < \varrho /2, \\
\frac{2s}{\varrho} -1   &\mathrm{if} \ \varrho/2 \leq s \leq \varrho, \\
1 &\mathrm{if} \ s > \varrho,
\end{cases}
$$
and we define the (positive) function
$$\Q_{\varrho}(t)=\int_{t}^{\infty} g(s)[1-\q_{\varrho}(s-t)] ds. $$
Observe that, as $g$ is decreasing, we have the bound
\begin{equation}
\label{Qbound}
\Q_{\varrho}(t) \leq \varrho g(t).
\end{equation}
Then, for any initial data $(u_0, v_0, w_0)\in \H$, we analyze the Cauchy problem
\begin{equation}
\label{volterra_rho}
\begin{cases}
\displaystyle
\partial_{ttt} u(t) + \alpha \partial_{tt} u(t) + \beta A\partial_{t} u(t) + \gamma Au(t) - \int_{0}^{t}g(s) Au(t-s)ds
=\Q_{\varrho}(t)Au_0,\\
\noalign{\vskip-1mm}
u(0) = u_0,\\
\noalign{\vskip.7mm}
\partial_t u(0) = v_0,\\
\noalign{\vskip.7mm}
\partial_{tt} u(0)=w_0.
\end{cases}
\end{equation}
This is a sort of a nonhomogeneous version of our original problem \eqref{volterra}-\eqref{IC}, with a
forcing term given by $\Q_{\varrho}Au_0$, depending itself on
(the first component of) the initial data.
On account of \eqref{Qbound}, problem \eqref{volterra}-\eqref{IC}
is formally recovered in the limit $\varrho\to 0$.

\smallskip
Defining, for $t\geq 0$ and $s>0$, the function
\begin{equation}
\label{eta_repr}
\eta^t(s) = \begin{cases} u(t)-u(t-s) &s \leq t,\\
u(t)+[\q_{\varrho}(s-t) -1]u_0 &s > t,
\end{cases}
\end{equation}
the first equation of \eqref{volterra_rho} becomes
\begin{equation}
\label{voltapprox}
\partial_{ttt} u(t) + \alpha \partial_{tt} u(t) + \beta A\partial_{t} u(t) + (\gamma-\kappa) Au(t)
+ \int_{0}^{\infty}g(s)A\eta^t(s) ds = 0.
\end{equation}
Indeed, using (g1),
$$\int_{0}^{\infty}g(s)A\eta^t(s) ds=\kappa Au(t)
-\int_{0}^tg(s)Au(t-s) ds+Au_0
\int_{t}^{\infty}g(s)[\q_{\varrho}(s-t) -1] ds.
$$From our discussion in Section~\ref{SRT},
assuming $u$ to be known and such that $\partial_t u\in \C([0,T],H^1)$,
the function $\eta^t$ defined via the representation formula~\eqref{eta_repr}
is exactly the strong solution on $[0,T]$ of the nonhomogeneous differential equation in $\M$
\begin{equation}
\label{eta}
\frac{d}{dt} \eta^t = \T \eta^t + \partial_t u(t),
\end{equation}
with initial datum
$$\eta^0(s)=\q_{\varrho}(s) u_0,$$
which is readily seen to belong to $\D(\T)$.

\begin{remark}
In particular, $\eta^t\in\D(\T)$ for all $t$ and Lemma~\ref{quasidissi} applies, so we have
\begin{equation}
\label{WOW}
\l \T\eta^t, \eta^t \r_{\M} = -\frac12\int_0^{\infty} g''(s) \| \eta^t(s) \|^2_1 ds.
\end{equation}
This is exactly the reason why we introduced the approximating $\varrho$-problem:
if $\varrho=0$, and the original equation is recovered,
the initial datum of $\eta^t$ becomes the constant function $u_0$, which \emph{does not} belong to $\D(\T)$, unless
$u_0=0$.
\end{remark}

In spite of the complete equivalence between \eqref{volterra_rho}
and \eqref{voltapprox},
the latter formulation is much more efficient in order
to obtain suitable bounds on the solutions (see the next Section~\ref{SAPEI}).
To this purpose,
we define the energy of the approximated problem to be
\begin{equation}
\label{energyrho}
\E_\varrho(t)=\|u(t)\|^2_1+\|\partial_t u(t)\|^2_1
+\|\partial_{tt}u(t) \|^2 + \|\eta^t\|^2_\M.
\end{equation}
Here, we highlighted the dependence on $\varrho$, in order not to confuse
this energy with the energy $\E(t)$ of the original equation~\eqref{volterra}.
In particular, since $\eta^0(s)=\q_{\varrho}(s)u_0$, and recalling Remark~\ref{Remg1},
\begin{equation}
\label{energyrho1}
\E_\varrho(0)\leq [1+g(0)]\|u_0\|^2_1+\|v_0\|^2_1+\|w_0\|^2.
\end{equation}


\section{The Approximated Problem: Existence and Uniqueness}
\label{SAPEU}

\noindent
The following theorem holds.

\begin{theorem}
\label{exist_unique}
Let $\varrho>0$ be fixed.
For every $T>0$ and every initial data $(u_0, v_0, w_0)\in \H$, the Cauchy problem
\eqref{volterra_rho} admits a unique weak solution $u$ on the time interval $[0,T]$,
possessing the regularity
$$(u,\partial_t u,\partial_{tt}u) \in \C([0,T],\H).$$
\end{theorem}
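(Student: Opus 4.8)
The plan is to reformulate the approximated problem as an abstract linear Cauchy problem on the extended phase space $\H\times\M$ and to produce the solution as the orbit of a strongly continuous semigroup. Introducing the state vector $U=(u,v,w,\eta)$, where $v=\partial_t u$, $w=\partial_{tt}u$ and $\eta=\eta^t$ is the memory variable defined in \eqref{eta_repr}, the system \eqref{voltapprox}--\eqref{eta} takes the form $\frac{d}{dt}U=\A U$ with
$$\A(u,v,w,\eta)=\Big(v,\,w,\,-\alpha w-\beta Av-(\gamma-\kappa)Au-\int_0^\infty g(s)A\eta(s)\,ds,\,\T\eta+v\Big),$$
subject to the datum $U_0=(u_0,v_0,w_0,\q_\varrho u_0)$, which belongs to $\H\times\M$ (indeed $\q_\varrho u_0\in\D(\T)$). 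The natural domain is
$$\D(\A)=\Big\{(u,v,w,\eta):\,v,w\in H^1,\ \eta\in\D(\T),\ \beta Av+(\gamma-\kappa)Au+\textstyle\int_0^\infty g A\eta\,ds\in H\Big\},$$
the last, non-local compatibility condition being exactly what is needed for the third component of $\A U$ to lie in $H$. Once $\A$ is shown to generate a $C_0$-semigroup, the unique mild solution $U(t)=\e^{t\A}U_0\in\C([0,T],\H\times\M)$ delivers, by reading off its first three components, a function $u$ with $(u,\partial_t u,\partial_{tt}u)\in\C([0,T],\H)$ (the first two equations $\dot u=v$, $\dot v=w$ identifying $v,w$ with the time derivatives of $u$). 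That this $u$ is the weak solution of \eqref{volterra_rho} follows by undoing the substitution \eqref{eta_repr}, using (g1) as in the derivation of \eqref{voltapprox}, and uniqueness is automatic from linearity of the semigroup.

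To prove that $\A$ generates a semigroup I would appeal to the Lumer--Phillips theorem, after passing to an inner product on $\H\times\M$ equivalent to the standard one and showing that $\A-\omega\I$ is dissipative for a suitable $\omega\ge0$. The $\M$-block is already under control: by Lemma~\ref{quasidissi} together with (g3), $\l\T\eta,\eta\r_\M=-\frac12\int_0^\infty g''\|\eta\|_1^2\le\frac{\alpha-\delta}{2}\|\eta\|_\M^2$, so the transport part is quasi-dissipative with $\omega=\frac{\alpha-\delta}2$, exactly as in Theorem~\ref{thminf}. For the $(u,v,w)$-block I would use the modified MGT energy, whose associated quadratic form is equivalent to $\|\cdot\|_\H^2$ precisely in the subcritical regime; here the relevant stability number is $\beta-(\gamma-\kappa)/\alpha=\varkappa+\kappa/\alpha>0$, so subcriticality is preserved, indeed improved, by the shift $\gamma\mapsto\gamma-\kappa$.

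The main obstacle is the coupling between the two blocks, specifically the memory integral $\int_0^\infty g A\eta\,ds$ in the $w$-equation: its kernel $g$ does not match the weight $-g'$ defining $\M$, so the cross terms do not cancel against the source $v$ of the $\eta$-equation as they would in the classical convex setting. I would handle them by integrating by parts in $s$, which is legitimate precisely because on $\D(\A)$ one has $\eta\in\D(\T)$, so $\eta(0)=0$ and the boundary contributions vanish; this is the very reason for the regularization $\varrho>0$. This converts the $g$-weighted coupling into $(-g')$-weighted quantities that can be absorbed into the equivalent norm and the shift $\omega$, yielding the quasi-dissipativity estimate.

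Finally, for the range condition $\mathrm{Range}(\lambda\I-\A)=\H\times\M$ with $\lambda>\omega$ large, I would solve the resolvent equation $\lambda U-\A U=F$, with $F=(f_1,f_2,f_3,f_4)\in\H\times\M$, explicitly. The last component yields $\eta$ in terms of $v$ through the $\T$-resolvent, $\eta(s)=\int_0^s\e^{-\lambda(s-y)}(v+f_4(y))\,dy$, exactly as in step (ii) of the proof of Theorem~\ref{thminf} (so $\eta\in\D(\T)$ and $\int_0^\infty g A\eta\,ds$ is a controlled expression in $v$); substituting together with $v=\lambda u-f_1$ and $w=\lambda v-f_2$ reduces the system to a single stationary equation of the form $(a(\lambda)A+b(\lambda)\I)u=F_\sharp$ with $a(\lambda),b(\lambda)>0$ for $\lambda$ large, uniquely solvable in $H^1$ by the Lax--Milgram lemma since $A$ is strictly positive. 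The Lumer--Phillips theorem then gives generation, completing the proof.
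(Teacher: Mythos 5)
Your overall strategy --- recasting \eqref{volterra_rho} as an abstract Cauchy problem on $\H\times\M$ and invoking Lumer--Phillips --- is genuinely different from the paper's, which proceeds by Galerkin approximation and closes everything with the integrated a priori estimate \eqref{Ebdd}. Unfortunately, your argument has a gap at exactly the point you flag as ``the main obstacle'', and it is not a repairable detail: the cross term coming from the memory integral cannot be absorbed as you claim. In the dissipativity computation the coupling contributes (up to the choice of equivalent $\H$-norm) the quantity
$$-\int_0^\infty g(s)\,\l \eta(s),\,w+\alpha v\r_1\,ds,$$
and to dominate it by $\|U\|_{\H\times\M}^2$ one needs $\int_0^\infty g(s)\|\eta(s)\|_1^2\,ds\lesssim\|\eta\|_\M^2$, i.e.\ $g\leq C(-g')$ --- which is precisely the discarded hypothesis \eqref{Dafermos}. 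Integrating by parts in $s$ does not help: since $\eta(0)=0$, the identity $\int_0^\infty g\,\eta\,ds=\int_0^\infty G\,\eta'\,ds$ with $G(s)=\int_s^\infty g(y)\,dy$ trades the weight $g$ for $G$ (not for $-g'$) and replaces $\eta$ by $\eta'$, which is not controlled by the phase-space norm; integrating the other way produces $\int_0^s\eta$, which is no better. There is no stationary estimate of the form $\l\A U,U\r\leq\omega\|U\|^2$ available here.

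The paper's way around this is intrinsically non-stationary: the offending $g$-weighted terms are not estimated at all, but are written as total time derivatives using \eqref{eta} and stored inside the Lyapunov functional $\F_\varrho$ (the terms $\alpha\int g\|\eta\|_1^2+2\int g\l\eta,\partial_t u\r_1$), after which (g3) controls only the $g'$- and $g''$-weighted remainders. Crucially, Lemma~\ref{boundEF} shows that $\F_\varrho$ is bounded \emph{below} by the energy but \emph{not above} by it: the upper bound in \eqref{INNE} carries the extra term $\int g\|\eta\|_1^2$, which is infinite for some $\eta\in\M$ when \eqref{Dafermos} fails. So $\F_\varrho$ cannot be polarized into an equivalent inner product on $\H\times\M$, and the Lumer--Phillips route is blocked; this is the reason the authors prove well-posedness via Galerkin approximations, for which the one-sided inequality $\frac{d}{dt}\F_\varrho\leq0$ from Theorem~\ref{BASIC}, combined with the lower bound in \eqref{INNE} and with \eqref{INNE2} at $t=0$, suffices to give \eqref{Ebdd}, hence convergence of the approximants and uniqueness. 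The remaining pieces of your proposal (the $\T$-block via Lemma~\ref{quasidissi} and Theorem~\ref{thminf}, the resolvent computation, the identification of the mild solution with a weak solution) are sound, but they cannot be assembled without the missing dissipativity estimate.
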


\medskip
\noindent
\emph{Proof.}
The argument, based on a Galerkin approximation procedure, is carried out in four steps.
The main ingredient is the
basic energy inequality \eqref{Ebdd} of Remark~\ref{RemEbdd}, proved in the next section.

\medskip
\noindent
{\bf I. Galerkin approximations.}
Let $\{x_j\}_{j=1}^\infty\subset H^2$ be an orthonormal basis of $H$.
For every $n$, we define the finite-dimensional space
$H_n = \mathrm{span}\{x_1, ..., x_n\}$,
and we denote by $P_n:H \rightarrow H_n$ the orthogonal projection of $H$ onto $H_n$.
We approximate the given initial data $(u_{0},v_{0},w_{0})\in\H$ with a
sequence
$$(u_{0 n},v_{0 n},w_{0 n})=(P_n u_{0},P_n v_{0},P_n w_{0})\to
(u_{0},v_{0},w_{0})\quad \text{in } \H.
$$
Then, there exists a function $u_n\in\C^3([0,T],H_n)$
of the form
$$u_n(t) = \sum_{j=1}^n a_j^n(t)x_j$$
satisfying,
for every test function $\zeta\in H^1$ and every $t \in [0,T]$,
\begin{align}
\label{weak_sol_n}
&\l \partial_{ttt}u_n(t), \zeta \r+ \alpha \l  \partial_{tt}u_n(t), \zeta \r
+ \beta \l  \partial_t u_n(t), \zeta \r_1 + \gamma \l  u_n(t), \zeta \r_1\\
&\quad - \int_{0}^{t}g(s)\l u_n(t-s), \zeta \r_1 ds
= \Q_{\varrho}(t)\l u_{0n},\zeta\r_1,\notag
\end{align}
along with the initial conditions
\begin{equation}
\label{initial_cond}
\begin{cases}
u_n(0) = u_{0 n}, \\
\partial_t u_n(0) = v_{0 n}, \\
\partial_{tt} u_n(0) = w_{0 n}.
\end{cases}
\end{equation}
Indeed, choosing $\zeta=x_j$ for any $j\in\{1,\ldots,n\}$, everything boils down to
a system of $n$ linear ordinary integro-differential equations\footnote{The equations are uncoupled if
the vectors $x_j$ are also orthogonal in $H^1$.
This occurs for instance if $A$ has compact inverse. In which case, the $x_j$ are chosen to be the eigenvectors of $A$.}
in the unknowns $a_j^n$,
and the existence (and uniqueness) of a solution on $[0,T]$ is guaranteed
by a classical fixed point argument.
Moreover,
calling for short
$z_n=(u_n, \partial_t u_n, \partial_{tt} u_n)$, and making use
of \eqref{Ebdd}, with the aid of~\eqref{energyrho1}, we have the uniform estimate
$$
\max_{t\in [0,T]}\|z_n(t)\|_{\H}\leq C\|z(0)\|_{\H},
$$
for some $C\geq 1$ independent of $n$, $\varrho$ and of the initial data.
Here, it is important to point out that \eqref{Ebdd} is obtained by considering the equation
for $u_n$ in the form \eqref{voltapprox}.
\qed

\medskip
\noindent
{\bf II. Passage to the limit.}
We claim that $z_n$ is a Cauchy sequence in $\C([0,T],\H)$.
The difference $z_n-z_m$
solves exactly \eqref{weak_sol_n}-\eqref{initial_cond},
but where every $n$-object is replaced by the difference of the corresponding
$n$-object and the $m$-object. Hence, a further
application of \eqref{Ebdd} yields
$$
\max_{t\in [0,T]}\|z_n(t)- z_m(t)\|_{\H} \leq C \|z_n(0)- z_m(0)\|_{\H},
$$
and the latter norm goes to zero by construction when $n,m\to\infty$.
Accordingly, $z_n \to z$ in $\C([0,T],\H)$ for some $z$, which
is readily seen to be of the form
$z=(u,\partial_t u,\partial_{tt} u)$.
\qed

\medskip
\noindent
{\bf III. Existence.}
Let us prove that $u$ is a solution to the Cauchy problem \eqref{volterra_rho}.
First, we observe that the desired initial conditions are satisfied by continuity.
Thus, we are left to verify
that the differential equation in \eqref{volterra_rho} holds true (in the weak sense).
This is obtained by letting $n\to\infty$ in \eqref{weak_sol_n}, just noting that
$$\Q_{\varrho}(t)\l u_{0n}, \zeta \r_1\to \Q_{\varrho}(t)\l u_0, \zeta \r_1,$$
for any given $\zeta \in H^1$ and every $t\in[0,T]$.
\qed

\medskip
\noindent
{\bf IV. Uniqueness.}
Let $u^1$ and $u^2$ be any two solutions to~\eqref{volterra_rho} originating
from the same initial data. Their difference
$u^1-u^2$
solves the Cauchy problem \eqref{volterra_rho} with null initial data.
Invoking once more \eqref{Ebdd}, we obtain that $u^1(t)=u^2(t)$ for all $t\in [0,T]$.
\qed

\section{The Approximated Problem: The Energy Inequality}
\label{SAPEI}

\noindent
Let $\varrho>0$ be fixed. Along this section,
$u(t)$ will be any solution to~\eqref{volterra_rho},
while $C$ will always denote a generic positive constant independent
of $\varrho$ and of the particular $u(t)$.
Our goal is to establish the dissipative character of~\eqref{volterra_rho}.
To this end, considering the equation in the form~\eqref{voltapprox} with $\eta^t$
given by~\eqref{eta_repr},
we introduce the energy-like functional
\begin{align*}
\F_{\varrho}(t)
&=\frac{\gamma-\kappa}{\alpha}\|\partial_t u(t) + \alpha
u(t)\|_1^2
+ \frac{\varkappa\alpha+\kappa}{\alpha} \| \partial_t u(t) \|_1^2
+ \|\partial_{tt} u(t) + \alpha \partial_t u(t)\|^2\\
&\quad +\| \eta^t \|_{\M}^2 +\alpha \int_{0}^{\infty} g(s) \| \eta^t(s) \|_1^2 ds
+ 2\int_{0}^{\infty} g(s) \l \eta^t(s), \partial_t u(t) \r_1 ds,
\end{align*}
keeping in mind that $\gamma>\kappa$ and $\varkappa>0$.
Offhand, it is not even clear whether $\F_{\varrho}(t)$ is well defined on the trajectories
of~\eqref{volterra_rho}. Indeed, this will be a consequence of the forthcoming
estimates. First, let us show that $\F_{\varrho}(t)$ is ``almost" equivalent to the energy $\E_{\varrho}(t)$
given by~\eqref{energyrho}.

\begin{lemma}
\label{boundEF}
There exists $C\geq 1$ such that
\begin{equation}
\label{INNE}
\frac1C\E_{\varrho}(t) \leq \F_{\varrho}(t)\leq C\bigg(\E_{\varrho}(t)+\int_{0}^{\infty} g(s) \|\eta^t(s)\|_1^2 ds\bigg).
\end{equation}
Moreover, for a possibly bigger $C$,
\begin{equation}
\label{INNE2}
\F_{\varrho}(0)\leq C \E_{\varrho}(0).
\end{equation}
\end{lemma}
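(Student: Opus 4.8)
The plan is to read \eqref{INNE} as a term-by-term comparison of the quadratic form $\F_\varrho(t)$ with the energy $\E_\varrho(t)$, the only delicate ingredient being the indefinite cross term $2\int_0^\infty g(s)\langle\eta^t(s),\partial_t u(t)\rangle_1\,ds$. For the lower bound $\frac1C\E_\varrho\leq\F_\varrho$ I would first dominate this cross term from below by Young's inequality, applied pointwise in $s$ with weight $\alpha$,
$$
2\int_0^\infty g\,\langle\eta^t,\partial_t u\rangle_1\,ds
\geq-\alpha\int_0^\infty g\,\|\eta^t\|_1^2\,ds-\frac1\alpha\int_0^\infty g\,\|\partial_t u\|_1^2\,ds .
$$
Since $\partial_t u(t)$ is independent of $s$, the last integral equals $\frac\kappa\alpha\|\partial_t u\|_1^2$ by (g1). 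Substituting into the definition of $\F_\varrho$, the summand $\alpha\int g\|\eta^t\|_1^2$ is exactly annihilated, while $\tfrac{\varkappa\alpha+\kappa}\alpha-\tfrac\kappa\alpha=\varkappa$, leaving the manifestly nonnegative bound
$$
\F_\varrho\geq\frac{\gamma-\kappa}\alpha\,\|\partial_t u+\alpha u\|_1^2+\varkappa\,\|\partial_t u\|_1^2+\|\partial_{tt}u+\alpha\partial_t u\|^2+\|\eta^t\|_\M^2 ,
$$
whose coefficients are all strictly positive because $\gamma>\kappa$ and $\varkappa>0$.

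From this positive form each summand of $\E_\varrho$ is recovered by the triangle inequality together with the continuous embedding $H^1\hookrightarrow H$ (valid since $A$ is strictly positive, giving $\|w\|\leq C\|w\|_1$): the term $\varkappa\|\partial_t u\|_1^2$ controls $\|\partial_t u\|_1^2$; combined with $\|\partial_t u+\alpha u\|_1^2$ it controls $\|u\|_1^2$ via $\alpha\|u\|_1\leq\|\partial_t u+\alpha u\|_1+\|\partial_t u\|_1$; combined with $\|\partial_{tt}u+\alpha\partial_t u\|^2$ and $\|\partial_t u\|\leq C\|\partial_t u\|_1$ it controls $\|\partial_{tt}u\|^2$; and $\|\eta^t\|_\M^2$ appears verbatim.

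The upper bound in \eqref{INNE} is routine. Expanding the squares, the first three summands of $\F_\varrho$ are bounded by $C(\|u\|_1^2+\|\partial_t u\|_1^2+\|\partial_{tt}u\|^2)$ using the triangle inequality and $H^1\hookrightarrow H$; the terms $\|\eta^t\|_\M^2$ and $\alpha\int g\|\eta^t\|_1^2$ already sit on the right-hand side; and the cross term is now estimated from above, again by Young's inequality but with weight $1$, as $2\int g\langle\eta^t,\partial_t u\rangle_1\leq\int g\|\eta^t\|_1^2+\kappa\|\partial_t u\|_1^2$. As a byproduct this upper bound certifies that $\F_\varrho(t)$ is finite along the (smooth Galerkin) trajectories, thereby settling the well-definedness issue raised just before the statement.

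Finally, \eqref{INNE2} follows from the upper bound evaluated at $t=0$, once one notes that the extra integral is itself controlled by $\E_\varrho(0)$: since $\eta^0(s)=\q_\varrho(s)u_0$ with $0\leq\q_\varrho\leq1$,
$$
\int_0^\infty g(s)\,\|\eta^0(s)\|_1^2\,ds=\Big(\int_0^\infty g(s)\,\q_\varrho(s)^2\,ds\Big)\|u_0\|_1^2\leq\kappa\,\|u_0\|_1^2\leq\kappa\,\E_\varrho(0),
$$
whence $\F_\varrho(0)\leq C(1+\kappa)\E_\varrho(0)$. The step requiring the most care is the sign bookkeeping in the lower bound: the precise coefficients $\frac{\gamma-\kappa}\alpha$ and $\frac{\varkappa\alpha+\kappa}\alpha$ in the definition of $\F_\varrho$ are exactly what makes the weight-$\alpha$ Young splitting produce a clean, coefficient-positive remainder, whereas a generic choice would leave an uncontrolled indefinite piece. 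Everything else is elementary, relying only on (g1), the subcriticality $\varkappa>0$, and the embedding $H^1\hookrightarrow H$.
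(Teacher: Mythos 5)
Your proposal is correct and follows essentially the same route as the paper: Young's inequality with weight $\alpha$ on the cross term to produce the nonnegative form $\frac{\gamma-\kappa}{\alpha}\|\partial_t u+\alpha u\|_1^2+\varkappa\|\partial_t u\|_1^2+\|\partial_{tt}u+\alpha\partial_t u\|^2+\|\eta^t\|_\M^2$ (which the paper identifies as an equivalent norm on $\H$, a fact you simply spell out via the triangle inequality and the embedding $H^1\subset H$), the routine upper bound, and the same estimate $\int_0^\infty g\,\|\eta^0\|_1^2\,ds\leq\kappa\|u_0\|_1^2$ for \eqref{INNE2}. No gaps.
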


\begin{proof}
By means of the H\"older and the Young inequalities,
$$
\bigg |2\int_{0}^{\infty} g(s) \l \eta^t(s), \partial_t u(t) \r_1 ds \bigg|
\leq \alpha \int_{0}^{\infty}g(s)\|\eta^t(s)\|^2_1 +\frac{\kappa}{\alpha}\|\partial_t u(t)\|^2_1.
$$
Accordingly
\begin{equation*}
\F_{\varrho}(t) \geq \frac{\gamma - \kappa}{\alpha} \|\partial_t u(t) + \alpha
u(t)\|_1^2 +\varkappa \|\partial_t u(t)\|_1^2+ \|\partial_{tt} u(t) + \alpha \partial_t u(t)\|^2 + \|\eta^t\|^2_{\M}.
\end{equation*}
Since
\begin{equation}
\label{equinorma}
\|(u,v,w)\|_{*}^2 = \frac{\gamma - \kappa}{\alpha}\|v+\alpha u\|_1^2 + \varkappa \|v\|_1^2+\|w+\alpha v\|^2
\end{equation}
is readily seen to be an equivalent norm on $\H$, we obtain the first bound in~\eqref{INNE}.
The second bound follows directly by the H\"older and the Young inequalities,
together with the continuous embedding $H^1\subset H$.
Finally, recalling that $\eta^0(s)=\q_{\varrho}(s)u_0$,
and using the fact that $0\leq \q_{\varrho}\leq 1$, we get
$$\int_{0}^{\infty} g(s) \|\eta^0(s)\|_1^2 ds \leq \kappa \|u_0\|^2_1\leq \kappa \E_\varrho(0),$$
implying the desired control~\eqref{INNE2} at $t=0$.
\end{proof}

\begin{remark}
If the classical condition \eqref{Dafermos} on the kernel is also assumed, it is immediately seen that
$\F_{\varrho}(t)$ and $\E_{\varrho}(t)$ are actually equivalent.
\end{remark}

The next step is showing that $\F_{\varrho}(t)$ is a decreasing function.
This will be attained via a differential inequality. Such an inequality
will make sense for the Galerkin approximations of $u(t)$, i.e., the solutions $u_n(t)$ to \eqref{weak_sol_n}-\eqref{initial_cond}.
Indeed, the scope of the inequality is twofold: first,
to produce a uniform estimate on the Galerkin approximations, and only in a second moment to extend the very same estimate
to the actual solution $u(t)$. Accordingly, it is understood that all the multiplications hereafter are carried out
within the approximation scheme.

\begin{theorem}
\label{BASIC}
The following inequality holds:
\begin{equation}
\label{diss_estimate}
\frac{d}{dt} \F_{\varrho}(t)+2\varkappa \alpha \| \partial_t u(t) \|_1^2 +\delta\|\eta^t \|^2_\M\leq 0.
\end{equation}
In particular, $\F_{\varrho}(t)$ is decreasing.
\end{theorem}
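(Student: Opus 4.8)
The plan is to differentiate $\F_\varrho(t)$ directly along the trajectories, using the equation in the form~\eqref{voltapprox} to eliminate the third-order term and the transport equation~\eqref{eta} to handle the time derivative of $\eta^t$. Writing for brevity $v=\partial_t u$ and $w=\partial_{tt}u$, equation~\eqref{voltapprox} reads $\partial_t w+\alpha w=-\beta Av-(\gamma-\kappa)Au-\int_0^\infty g(s)A\eta^t(s)\,ds$, while~\eqref{eta} gives $\partial_t\eta^t=\T\eta^t+v=-(\eta^t)'+v$. I would differentiate each of the six summands of $\F_\varrho$ in turn, substituting these two identities wherever $\partial_t w$ or $\partial_t\eta^t$ appears, and converting every $A$ against an $H^1$ factor through $\l x,Ay\r=\l x,y\r_1$.

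Two structural devices then do the work. First, for the term $\|\eta^t\|_\M^2$ I would invoke the identity~\eqref{WOW} coming from Lemma~\ref{quasidissi}, replacing $2\l\T\eta^t,\eta^t\r_\M$ by $-\int_0^\infty g''(s)\|\eta^t(s)\|_1^2\,ds$. Second, in differentiating the two integral summands $\alpha\int g\|\eta^t\|_1^2$ and $2\int g\l\eta^t,v\r_1$, the contributions involving $(\eta^t)'(s)$ must be integrated by parts in the memory variable $s$: here $\l(\eta^t)'(s),\eta^t(s)\r_1=\frac12\frac{d}{ds}\|\eta^t(s)\|_1^2$, and similarly for the mixed term, producing $\int_0^\infty g'(s)\|\eta^t(s)\|_1^2\,ds$ and $-2\l\eta^t,v\r_\M$ respectively. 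The boundary contributions vanish precisely because $\eta^t\in\D(\T)$ forces $\eta^t(0)=0$, while $g(s)\to0$ at infinity — this is exactly the payoff of working with the approximated $\varrho$-problem rather than the original equation, as highlighted after~\eqref{WOW}.

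Collecting all contributions, the crux is a bookkeeping of cancellations engineered by the coefficients $\frac{\gamma-\kappa}{\alpha}$ and $\frac{\varkappa\alpha+\kappa}{\alpha}$ in the definition of $\F_\varrho$. Using the subcritical relation $\varkappa\alpha=\alpha\beta-\gamma$ (equivalently $\gamma+\varkappa\alpha=\alpha\beta$), I expect every mixed product — the $\l v,w\r_1$, $\l u,w\r_1$ and $\l u,v\r_1$ terms, together with the memory cross terms $\int g\l w,\eta^t\r_1$, $\int g\l v,\eta^t\r_1$ and $\l\eta^t,v\r_\M$ — to cancel identically. What survives is only $-2\varkappa\alpha\|v\|_1^2$ together with $\int_0^\infty[\alpha g'(s)-g''(s)]\|\eta^t(s)\|_1^2\,ds$. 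Verifying that these survivors are exactly the advertised ones, and that no spurious term lingers, is the delicate part of the computation; I anticipate this cross-term cancellation, rather than either analytic identity, to be the main obstacle, since it only closes because of the precise algebraic matching forced by $\varkappa>0$.

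Finally, I would dispose of the surviving integral by rewriting $\alpha g'-g''=[(\alpha-\delta)g'-g'']+\delta g'$ and invoking (g3), which kills the bracketed part, leaving $\int_0^\infty[\alpha g'(s)-g''(s)]\|\eta^t(s)\|_1^2\,ds\leq\delta\int_0^\infty g'(s)\|\eta^t(s)\|_1^2\,ds=-\delta\|\eta^t\|_\M^2$. Combined with the $-2\varkappa\alpha\|v\|_1^2$ term, this yields $\frac{d}{dt}\F_\varrho(t)\leq-2\varkappa\alpha\|\partial_t u\|_1^2-\delta\|\eta^t\|_\M^2$, which is~\eqref{diss_estimate}; the monotonicity of $\F_\varrho$ is then immediate, since both $\varkappa\alpha$ and $\delta$ are positive.
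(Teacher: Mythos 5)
Your proposal is correct and follows essentially the same route as the paper: differentiating $\F_\varrho$ along trajectories is exactly the paper's multiplication of \eqref{voltapprox} by $\partial_{tt}u+\alpha\partial_t u$ combined with the product of \eqref{eta} with $\eta^t$ in $\M$, and you use the same key ingredients — the identity \eqref{WOW} from Lemma~\ref{quasidissi}, integration by parts in $s$ with boundary terms vanishing thanks to the $\varrho$-regularization, the cancellation of all cross terms leaving $-2\varkappa\alpha\|\partial_t u\|_1^2+\int_0^\infty[\alpha g'-g'']\|\eta^t\|_1^2\,ds$, and the splitting $\alpha g'-g''=[(\alpha-\delta)g'-g'']+\delta g'$ together with (g3). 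No gaps.
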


\begin{proof}
In light of the equivalence between the differential equation in \eqref{volterra_rho} and
\eqref{voltapprox}, we
multiply \eqref{voltapprox} by $\partial_{tt} u + \alpha \partial_{t} u$ in $H$, to get
(omitting everywhere the dependence on $t$)
\begin{align}
\label{main_eq}
&\frac{d}{dt} \Big(\frac{\gamma-\kappa}{\alpha}\|\partial_t u+\alpha u\|_1^2
+ \frac{\varkappa\alpha+\kappa}{\alpha} \| \partial_t u \|_1^2
+ \|\partial_{tt} u+\alpha \partial_t u\|^2\Big)\\
\noalign{\vskip1mm}
&\quad+2(\varkappa \alpha + \kappa)  \| \partial_t u \|_1^2={\mathfrak I}_1 + {\mathfrak I}_2,\notag
\end{align}
where
$${\mathfrak I}_1 =-2\int_{0}^{\infty} g(s)\l \eta(s), \alpha \partial_{t} u\r_1  ds
\qquad\text{and}\qquad
{\mathfrak I}_2=-2\int_{0}^{\infty} g(s) \l \eta(s), \partial_{tt} u\r_1  ds.$$
Let us examine ${\mathfrak I}_1$. Exploiting~\eqref{eta},  we have
$${\mathfrak I}_1 =-\frac{d}{dt} \bigg (\alpha \int_0^{\infty} g(s)\|\eta(s)\|^2_1 ds\bigg)
+ 2\alpha \int_0^{\infty} g(s) \l \T \eta(s), \eta(s) \r_1 ds.
$$
Besides,
$$2\alpha\int_0^{\infty} g(s) \l \T \eta(s), \eta(s) \r_1 ds
=- \alpha\int_0^{\infty} g(s)\frac{d}{ds}\|\eta(s)\|^2_1 ds=\alpha\int_0^{\infty} g'(s) \|\eta(s)\|^2_1 ds.
$$
The boundary terms of the latter integration by parts vanish.
Indeed, for any fixed $t>0$, we infer from the representation formula \eqref{eta_repr}
that
$$\|\eta^t(N)\|_1=\|u(t)\|_1\qquad\text{and}\qquad
\|\eta^t(\nu)\|_1 = \|u(t)-u(t-\nu)\|_1,$$
whenever $N$ is large enough and $\nu$ small enough. Knowing that
$g(\infty)=0$ and $g(0)<\infty$, and by the time-continuity of $u$,
we conclude that
$$\lim_{N\to\infty} g(N)\|\eta(N)\|_1^2=0\qquad\text{and}\qquad
\lim_{\nu\to 0} g(\nu)\|\eta(\nu)\|_1^2=0.$$
In summary,
$$
{\mathfrak I}_1 =-\frac{d}{dt} \bigg (\alpha \int_0^{\infty} g(s) \| \eta(s) \|^2_1 ds \bigg)
+\alpha\int_0^{\infty} g'(s) \|\eta(s)\|^2_1 ds.
$$
By the same argument, we find
$$
{\mathfrak I}_2 =-\frac{d}{dt} \bigg(2\int_{0}^{\infty} g(s)\l\eta(s),\partial_{t}u\r_1 ds\bigg)
+ 2\int_0^{\infty} g'(s)\l\eta(s), \partial_{t} u\r_1  ds +2\kappa \| \partial_t u \|_1^2.
$$
Substituting now ${\mathfrak I}_1$ and ${\mathfrak I}_2$ into~\eqref{main_eq},
we arrive at
$$
\frac{d}{dt} \F_{\varrho}+2\varkappa \alpha \| \partial_t u \|_1^2
=\frac{d}{dt}\|\eta\|_{\M}^2+\alpha\int_0^{\infty} g'(s) \|\eta(s)\|^2_1 ds +2\int_0^{\infty} g'(s)\l\eta(s), \partial_{t} u\r_1 ds.
$$
At this point, we multiply equation~\eqref{eta} by $\eta$ in $\M$.
On account of~\eqref{WOW}, this yields
\begin{equation*}
\frac{d}{dt}\|\eta \|^2_{\M}=
-\int_0^{\infty} g''(s)\|\eta(s)\|^2_1 ds-2\int_0^{\infty} g'(s) \l\eta(s),\partial_t u \r_1 ds.
\end{equation*}
Collecting the last two identities, we end up with
$$
\frac{d}{dt} \F_{\varrho}+2\varkappa \alpha \| \partial_t u \|_1^2
= \int_{0}^{\infty}\big[\alpha g'(s)-g''(s)\big] \|\eta(s)\|^2_1 ds.
$$
The desired inequality follows
by assumption (g3) on the memory kernel.
\end{proof}

\begin{remark}
\label{RemEbdd}
In particular, an integration in time of \eqref{diss_estimate} together with
Lemma~\ref{boundEF} entail the (uniform) estimate
\begin{equation}
\label{Ebdd}
\|u(t)\|^2_1+\|\partial_t u(t)\|^2_1
+\|\partial_{tt}u(t) \|^2\leq \E_{\varrho}(t) \leq C\E_{\varrho}(0),
\end{equation}
for every $t\geq 0$ and some $C\geq 1$.
\end{remark}

\begin{remark}
As mentioned in the Introduction, if we are only interested in proving the existence
of a solution on any interval $[0,T]$ to~\eqref{volterra_rho}, and in turn to~\eqref{volterra},
we do not really need assumption~\eqref{subcritical}.
Indeed, if~\eqref{subcritical} fails to hold, by adding the term $m\partial_{tt}u$ to both sides of~\eqref{volterra_rho},
up to taking $m>0$ large enough that
$$\varkappa_m=\beta-\frac{\gamma}{\alpha+m}>0,$$
instead of~\eqref{diss_estimate} we find the estimate
$$
\frac{d}{dt} \F_{\varrho}(t)\leq C\E_\varrho(t),
$$
for some $C>0$. This is enough to conclude that, for a possibly bigger $C$,
$$\|u(t)\|^2_1+\|\partial_t u(t)\|^2_1
+\|\partial_{tt}u(t) \|^2\leq \E_{\varrho}(t) \leq C\E_{\varrho}(0)\e^{CT},$$
for every $t\in[0,T]$.
\end{remark}

\section{The Approximated Problem: Exponential Decay of the Energy}
\label{SAPEDE}

\noindent
We now dwell on the exponential decay of the energy $\E_\varrho(t)$ of the approximated problem. Again, in what follows,
$u(t)$ will be any solution to~\eqref{volterra_rho}, for a fixed $\varrho>0$.

\begin{theorem}
\label{EXPrho}
Let assumption \eqref{gexp} hold. Then there exist
$M_0\geq 1$ and $\omega>0$, both independent of $\varrho$ and of the initial data,
such that the energy~\eqref{energyrho} fulfills the exponential decay estimate
\begin{equation}
\label{DECAYrho}
\E_\varrho(t)\leq M_0\E_\varrho(0)\e^{-\omega t}.
\end{equation}
\end{theorem}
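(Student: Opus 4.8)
The plan is to prove exponential decay by perturbing the Lyapunov functional $\F_\varrho(t)$ from Theorem~\ref{BASIC} with an auxiliary functional that recovers dissipation in the directions $\|u\|_1$ and $\|\partial_{tt}u\|$, which are absent from the differential inequality~\eqref{diss_estimate}. Indeed, Theorem~\ref{BASIC} already gives us control over $\|\partial_t u\|_1^2$ and $\|\eta^t\|_\M^2$, but to obtain exponential decay of the full energy $\E_\varrho$ we must also dominate $\|u\|_1^2$ and $\|\partial_{tt}u\|^2$. The standard device is to introduce a perturbation of the form
\begin{equation*}
\Lambda_\varrho(t)=\F_\varrho(t)+\varepsilon\,\Phi_\varrho(t),
\end{equation*}
where $\Phi_\varrho$ is a carefully chosen multiplier functional and $\varepsilon>0$ is a small parameter to be fixed at the end, so small that $\Lambda_\varrho$ remains equivalent to $\E_\varrho$ (using Lemma~\ref{boundEF}) while its time derivative becomes strictly dissipative.

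First I would construct $\Phi_\varrho(t)$ using multipliers tailored to the MGT structure. A natural choice, mirroring the subcritical MGT analysis, is a combination such as $\l \partial_{tt}u+\alpha\partial_t u,\partial_t u+\alpha u\r$ together with terms involving $u$, designed so that differentiating along~\eqref{voltapprox} produces a negative multiple of $\|u\|_1^2+\|\partial_{tt}u\|^2$ plus error terms controlled by the quantities already dissipated in~\eqref{diss_estimate}. Differentiating $\Phi_\varrho$ and substituting the equation in the form~\eqref{voltapprox} will generate the memory term $\int_0^\infty g(s)\l\eta^t(s),\,\cdot\,\r_1\,ds$; here I would use the Hölder and Young inequalities to bound it by a small fraction of $\|u\|_1^2$ plus a constant times $\int_0^\infty g(s)\|\eta^t(s)\|_1^2\,ds$.

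The main obstacle is precisely controlling this last term $\int_0^\infty g(s)\|\eta^t(s)\|_1^2\,ds$, because the dissipation in~\eqref{diss_estimate} only controls the $\M$-norm $\int_0^\infty -g'(s)\|\eta^t(s)\|_1^2\,ds$, and in our nonconvex setting $g$ cannot be pointwise dominated by $-g'$ as the Dafermos condition~\eqref{Dafermos} would provide. This is exactly where the hypothesis~\eqref{gexp} enters and replaces~\eqref{Dafermos}. The strategy is to split the integral at a threshold or, more robustly, to exploit~\eqref{gexp} together with the structure $\eta^t(s)=u(t)-u(t-s)$ for $s\le t$: using $g(s)\le M_g\e^{-\omega_g s}$ one estimates $\int_0^\infty g(s)\|\eta^t(s)\|_1^2\,ds$ by comparing it against the memory dissipation and the instantaneous energy. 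Concretely, I expect to prove an inequality of the form
\begin{equation*}
\int_0^\infty g(s)\|\eta^t(s)\|_1^2\,ds\leq C\Big(\E_\varrho(0)\e^{-\omega_0 t}+\int_0^\infty -g'(s)\|\eta^t(s)\|_1^2\,ds\Big)
\end{equation*}
for suitable constants, or alternatively to absorb this term into an exponentially decaying remainder by an interpolation between the exponential bound on $g$ and the summability of $-g'$. This control is the genuinely new ingredient forced by the absence of convexity and of~\eqref{Dafermos}.

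Once these estimates are assembled, I would choose $\varepsilon$ small enough that the good terms from $\varepsilon\frac{d}{dt}\Phi_\varrho$ dominate the errors, obtaining a differential inequality
\begin{equation*}
\frac{d}{dt}\Lambda_\varrho(t)+c\,\Lambda_\varrho(t)\leq 0
\end{equation*}
for some $c>0$ independent of $\varrho$, possibly after accounting for the exponentially small leftover term coming from~\eqref{gexp}. Integrating via Gronwall and invoking the equivalence $\tfrac1C\E_\varrho\le\Lambda_\varrho\le C\E_\varrho$ then yields~\eqref{DECAYrho} with $\omega$ and $M_0$ independent of $\varrho$, as required. The $\varrho$-independence of all constants is crucial, since the final Section~\ref{SAPMR} passes to the limit $\varrho\to0$ to conclude Theorem~\ref{EXP}.
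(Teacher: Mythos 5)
Your high-level scheme (perturb $\F_\varrho$ by an auxiliary multiplier functional, identify $\int_0^\infty g(s)\|\eta^t(s)\|_1^2\,ds$ as the obstruction, and let \eqref{gexp} replace \eqref{Dafermos} there) matches the paper's strategy, and your multiplier $\l \partial_{tt}u+\alpha\partial_t u,\partial_t u+\alpha u\r$ is essentially the paper's $\Psi_1$. But the step on which everything hinges is wrong as you state it. The inequality
$$\int_0^\infty g(s)\|\eta^t(s)\|_1^2\,ds\leq C\Big(\E_\varrho(0)\e^{-\omega_0 t}+\|\eta^t\|_\M^2\Big)$$
is not provable in this setting. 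For $s\le t$ one has $\eta^t(s)=u(t)-u(t-s)$, so the left-hand side contains (up to constants) the history integral $\int_0^t g(t-s)\|u(s)\|_1^2\,ds$. This cannot be absorbed into $\|\eta^t\|_\M^2=\int_0^\infty -g'(s)\|\eta^t(s)\|_1^2\,ds$, because without convexity or \eqref{Dafermos} the weight $-g'$ may be far smaller than $g$ on large sets (this is exactly the situation of the paper's second Example); and it cannot be bounded by $\E_\varrho(0)\e^{-\omega_0 t}$ either, since it involves recent values $\|u(s)\|_1^2$ with $s$ close to $t$ --- bounding those by a decaying multiple of the initial energy is precisely the conclusion you are trying to prove. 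For the same reason your claimed two-sided equivalence $\tfrac1C\E_\varrho\le\Lambda_\varrho\le C\E_\varrho$ fails: Lemma~\ref{boundEF} only gives an upper bound for $\F_\varrho$ containing the extra term $\int_0^\infty g(s)\|\eta^t(s)\|_1^2\,ds$, and the Remark after it points out that genuine equivalence would require \eqref{Dafermos}. So the differential inequality $\frac{d}{dt}\Lambda_\varrho+c\Lambda_\varrho\le 0$ does not close, and your leftover is not ``exponentially small'': it is a full history term.

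What the paper does instead, and what is missing from your plan, is the bookkeeping of that history term. The auxiliary functional is $\Psi=\Psi_1+\Psi_2$ with $\Psi_2=\frac\alpha2\int_0^\infty G(s)\|\eta^t(s)-u(t)\|_1^2\,ds$, $G(s)=\int_s^\infty g(y)\,dy$, chosen so that $\frac{d}{dt}\Psi_2$ cancels the memory couplings \emph{exactly} rather than estimating them by Young. The price is that $\Psi$ (hence $\Lambda$) is bounded above only by $C[\E_\varrho(t)+\E_\varrho(0)\e^{-\omega_g t}+\Theta(t)]$, where $\Theta(t)=\int_0^t\e^{-\omega_g(t-s)}\|u(s)\|_1^2\,ds$. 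One then extracts an \emph{extra} dissipation $-\mu\|u(t)\|_1^2$ on the right-hand side of the differential inequality, applies Gronwall, and shows by exchanging the order of integration that
$$\omega\int_0^t\e^{\omega s}\Theta(s)\,ds\le\frac{\omega}{\omega_g-\omega}\int_0^t\e^{\omega s}\|u(s)\|_1^2\,ds\le\mu\int_0^t\e^{\omega s}\|u(s)\|_1^2\,ds$$
for $\omega$ small, so the accumulated history contribution is eaten by the accumulated extra dissipation. This Gronwall--Fubini comparison is the genuinely new ingredient replacing \eqref{Dafermos}; without it (or some equivalent device) your argument does not go through.
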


The  proof of the theorem requires a number of passages: the starting point is the dissipative estimate~\eqref{diss_estimate},
along with the controls provided by Lemma~\ref{boundEF}. However, such an estimate alone
is not enough to drive uniformly the energy to zero. Accordingly,
we need to introduce suitable auxiliary functionals, in order to attain an efficient differential inequality.
All the constants appearing in this section, such as the generic constant $C>0$, are understood to be independent
of $\varrho$ and of the particular solution $u(t)$.

\smallskip
With reference to  \eqref{gexp}, let us denote
$$
\Theta(t) = \int_0^t \e^{-\omega_g(t-s)} \|u(s)\|^2_1 ds.
$$
A preliminary lemma is needed.

\begin{lemma}
\label{Lemmafine}
There exists a functional $\Psi(t)$ satisfying the differential inequality
\begin{equation}
\label{Psi-ineq}
\frac{d}{dt} \Psi(t) + \vartheta \F_\varrho(t)
\leq C \big[\| \partial_t u(t)\|^2_1+\|\eta^t\|_\M^2\big],
\end{equation}
along with the bounds
\begin{equation}
\label{Psi-bound}
-C\E_{\varrho}(t)\leq \Psi(t) \leq C\big[\E_\varrho(t)+\E_{\varrho}(0) \e^{-\omega_g t}+\Theta(t)\big],
\end{equation}
for some $\vartheta>0$ and $C>0$.
\end{lemma}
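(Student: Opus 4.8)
The plan is to obtain $\Psi$ as a linear combination of three auxiliary functionals, each produced by testing equation~\eqref{voltapprox} against a well-chosen multiplier and designed to recover one of the dissipation terms missing from the basic estimate~\eqref{diss_estimate} (which only controls $\|\partial_t u\|_1^2$ and $\|\eta^t\|_\M^2$). Since by Lemma~\ref{boundEF} the functional $\F_\varrho$ is comparable to $\E_\varrho(t)+\int_0^\infty g(s)\|\eta^t(s)\|_1^2\,ds$, generating $-\vartheta\F_\varrho$ forces me to produce, in addition, negative multiples of $\|u\|_1^2$, of $\|\partial_{tt}u\|^2$, and of $\int_0^\infty g(s)\|\eta^t(s)\|_1^2\,ds$.

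First, multiplying~\eqref{voltapprox} by $u$ in $H$ yields a functional $\Phi_1$ with
$$\frac{d}{dt}\Phi_1+(\gamma-\kappa)\|u\|_1^2=\alpha\|\partial_t u\|^2-\int_0^\infty g(s)\l\eta^t(s),u\r_1\,ds.$$
Next, multiplying by $\partial_t u$ — and deliberately \emph{not} by $\partial_{tt}u$, which would pair $\eta^t$ with a multiplier lying only in $H$, where $\|\cdot\|_1$ is meaningless — gives, after a change of sign, a functional $\Phi_2$ with
$$\frac{d}{dt}\Phi_2=-\|\partial_{tt}u\|^2+\beta\|\partial_t u\|_1^2+\int_0^\infty g(s)\l\eta^t(s),\partial_t u\r_1\,ds.$$
Finally, to reach the memory term I introduce $\Phi_3=\int_0^\infty \mathfrak g(s)\|\eta^t(s)\|_1^2\,ds$ with $\mathfrak g(s)=\int_s^\infty g(\sigma)\,d\sigma$. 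Differentiating along~\eqref{eta} and integrating by parts in $s$ (the boundary terms vanishing exactly as in Lemma~\ref{quasidissi} and Theorem~\ref{BASIC}, since $\mathfrak g'=-g$ and $\eta^t(0)=0$), I obtain $\frac{d}{dt}\Phi_3=-\int_0^\infty g(s)\|\eta^t(s)\|_1^2\,ds+2\l\partial_t u,\textstyle\int_0^\infty \mathfrak g(s)\eta^t(s)\,ds\r_1$. I then set $\Psi=\Phi_2+a\Phi_1+b\Phi_3$ for constants $a,b>0$ to be fixed.

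The $H^1$ pairings in $\frac{d}{dt}\Phi_1,\frac{d}{dt}\Phi_2$ are harmless: each is bounded by Young's inequality by $\epsilon(\|u\|_1^2\text{ or }\|\partial_t u\|_1^2)+C\int_0^\infty g\|\eta^t\|_1^2\,ds$, while $\alpha\|\partial_t u\|^2\le C\|\partial_t u\|_1^2$ through $H^1\subset H$. The delicate term is the cross term in $\frac{d}{dt}\Phi_3$, which I bound by a weighted Cauchy–Schwarz inequality as
$$2\l\partial_t u,\textstyle\int_0^\infty \mathfrak g\,\eta^t\,ds\r_1\le \frac1\lambda\|\partial_t u\|_1^2+\lambda\Big(\int_0^\infty\frac{\mathfrak g(s)^2}{g(s)}\,ds\Big)\int_0^\infty g(s)\|\eta^t(s)\|_1^2\,ds.$$
Here lies the main obstacle: I must show $\int_0^\infty \mathfrak g^2/g\,ds<\infty$, which is precisely where~\eqref{gexp} takes over the role of the classical inequality~\eqref{Dafermos}. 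To prove it I exploit that $g$ is \emph{decreasing}, so that $g(\sigma)\le g(s)$ for $\sigma\ge s$; splitting the integral defining $\mathfrak g(s)$ at the crossover $\sigma_*(s)=\omega_g^{-1}\log(M_g/g(s))$ between the two available bounds $g(s)$ and $M_g\e^{-\omega_g\sigma}$ gives $\mathfrak g(s)\le g(s)\big[\sigma_*(s)-s+\omega_g^{-1}\big]$, whence $\mathfrak g(s)^2/g(s)\le C\e^{-\omega_g s}$ is integrable. With this in hand I fix first $\lambda$ small, then $b$ large (to absorb the $\int g\|\eta^t\|_1^2$ produced by $\Phi_1,\Phi_2$), and finally $a$, reaching $\frac{d}{dt}\Psi\le -c\big[\|u\|_1^2+\|\partial_{tt}u\|^2+\int_0^\infty g\|\eta^t\|_1^2\,ds\big]+C\|\partial_t u\|_1^2$. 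Adding $\vartheta\F_\varrho$ and invoking $\F_\varrho\le C(\E_\varrho+\int g\|\eta^t\|_1^2)$ from Lemma~\ref{boundEF}, the negative terms dominate $\vartheta\F_\varrho$ for $\vartheta$ small, leaving only $C\|\partial_t u\|_1^2$ and the residual $\vartheta C\|\eta^t\|_\M^2$ — which is exactly why $\|\eta^t\|_\M^2$ appears on the right-hand side of~\eqref{Psi-ineq}.

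For the bounds~\eqref{Psi-bound}, the terms $\Phi_1,\Phi_2$ are controlled by $\pm C\E_\varrho(t)$ via Hölder and Young, and $\Phi_3\ge0$, giving the lower bound. The upper bound on $\Phi_3$ is where $\Theta$ enters: by~\eqref{gexp},
$$\Phi_3\le\frac{M_g}{\omega_g}\int_0^\infty\e^{-\omega_g s}\|\eta^t(s)\|_1^2\,ds\le C\big[\E_\varrho(t)+\Theta(t)+\E_\varrho(0)\e^{-\omega_g t}\big],$$
the $\Theta$ contribution arising from $\eta^t(s)=u(t)-u(t-s)$ on $s\le t$ after the substitution $\tau=t-s$, and the exponential contribution from the tail $s>t$, where $|\q_\varrho-1|\le1$ and $\|u_0\|_1^2\le\E_\varrho(0)$. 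This yields~\eqref{Psi-bound} and completes the construction.
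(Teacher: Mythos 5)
Your construction is correct, and it reaches \eqref{Psi-ineq}--\eqref{Psi-bound} by a genuinely different decomposition than the paper's. The paper takes $\Psi=\Psi_1+\Psi_2$ with the single product functional $\Psi_1=-\l \partial_t u-\alpha u,\partial_{tt}u+\alpha\partial_t u\r$, whose derivative directly yields the squares $\|\partial_t u+\alpha u\|_1^2$ and $\|\partial_{tt}u+\alpha\partial_t u\|^2$ of the equivalent norm \eqref{equinorma}, whereas you split this into the two classical multipliers $u$ and $-\partial_t u$; that difference is cosmetic. The substantive divergence is in the memory functional: the paper uses $\Psi_2=\frac{\alpha}{2}\int_0^\infty G(s)\|\eta^t(s)-u(t)\|_1^2\,ds$, chosen precisely so that the cross term $+\alpha\int g\l\eta,u\r_1$ in $\frac{d}{dt}\Psi_2$ cancels \emph{exactly} the term $-\alpha\int g\l\eta,u\r_1$ produced by $\Psi_1$, so no further estimate on the convolution kernel is needed and \eqref{gexp} enters only through the pointwise bound $G(s)\leq M_G\e^{-\omega_g s}$ in the upper bound \eqref{Psi-bound}. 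Your $\Phi_3=\int_0^\infty G(s)\|\eta^t(s)\|_1^2\,ds$ instead leaves the cross term $2\l\partial_t u,\int_0^\infty G\,\eta^t\,ds\r_1$, which you must control by weighted Cauchy--Schwarz at the price of the extra integrability claim $\int_0^\infty G^2/g\,ds<\infty$; your crossover argument for this (splitting $G(s)=\int_s^\infty g$ at $\sigma_*(s)=\omega_g^{-1}\log(M_g/g(s))$, using monotonicity of $g$ below $\sigma_*$ and \eqref{gexp} above it, and then $r(1-\log r)^2$ bounded on $(0,1]$) is valid, with the usual convention on the set $\{g=0\}$ where $G$ vanishes too. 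So your route is more modular but requires this additional lemma, which the paper's ansatz $\|\eta-u\|_1^2$ is specifically designed to avoid; also note that your order of fixing constants should be $\lambda$, then $a$ (any fixed positive value suffices), then $b$, since the $\Phi_1$ cross term feeds a multiple of $a$ into the $\int g\|\eta\|_1^2$ budget that $b$ must dominate --- a bookkeeping point, not a gap.
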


\begin{proof}
Calling
$$G(s)=\int_s^{\infty} g(y)dy,$$
we define our functional $\Psi$ to be the sum
$$\Psi(t)=\Psi_1(t)+\Psi_2(t),$$
where
\begin{align*}
\Psi_1(t)&= -\l \partial_t u(t)-\alpha u(t), \partial_{tt} u(t) + \alpha \partial_t u(t)\r,\\
\noalign{\vskip1mm}
\Psi_2(t)&=\frac\alpha2 \int_0^{\infty} G(s) \|\eta^t(s) - u(t)\|^2_1 ds.
\end{align*}
For some $C>0$, we have the inequalities
\begin{align}
\label{PsiUNO}
|\Psi_1(t)|&\leq C\E_\varrho(t),\\
\noalign{\vskip1mm}
\label{PsiDUE}
0\leq \Psi_2(t)&\leq C\big[\E_{\varrho}(0) \e^{-\omega_g t}+\Theta(t)\big].
\end{align}
Indeed, \eqref{PsiUNO} is straightforward. Concerning \eqref{PsiDUE},
we first observe that the exponential decay~\eqref{gexp} of $g$ implies that
$G(s)\leq M_G\e^{-\omega_g s}$, with $M_G=M_g/\omega_g$.
Hence, writing explicitly $\eta^t-u(t)$ via~\eqref{eta_repr},
\begin{align*}
\frac2\alpha\Psi_2(t)&= \int_t^{\infty} G(s) [\q_{\varrho}(s-t)-1]^2 \| u_0 \|^2_1 ds+\int_0^t G(t-s) \|u(s)\|^2_1 ds\\
&\leq \frac{M_G}{\omega_g}\E_\varrho(0)\e^{-\omega_g t}+ M_G\Theta(t).
\end{align*}
Collecting~\eqref{PsiUNO}-\eqref{PsiDUE},
the bounds~\eqref{Psi-bound} are established.
We are left to prove~\eqref{Psi-ineq}.
Exploiting~\eqref{voltapprox}, we compute the time-derivative of $\Psi_1$ as
(omitting the dependence on time)
\begin{align*}
&\frac{d}{dt} \Psi_1+\frac{\gamma-\kappa}{\alpha}\| \partial_t u + \alpha u \|^2_1+\|\partial_{tt}u + \alpha \partial_t u\|^2\\
\noalign{\vskip1mm}
&=\frac{2\gamma-3\kappa-\varkappa\alpha}{\alpha}\l \partial_t u + \alpha u,\partial_t u \r_1
+2\alpha\l \partial_{tt} u + \alpha \partial_t u,\partial_{t} u \r + \frac{2\kappa+2\varkappa\alpha}{\alpha}\| \partial_t u \|^2_1\\
&\quad +\int_0^{\infty} g(s)\l \eta(s),\partial_t u\r_1 ds- \alpha\int_0^{\infty} g(s)\l \eta(s),u\r_1 ds.
\end{align*}
By the Young and the H\"older inequalities, for some $C>0$ large enough we obtain
\begin{align*}
&\frac{d}{dt} \Psi_1+\frac{\gamma-\kappa}{2\alpha}\| \partial_t u + \alpha u \|^2_1+\frac12 \|\partial_{tt}u +\alpha\partial_t u\|^2\\
\noalign{\vskip1mm}
&\leq C\|\partial_t u\|_1^2+\frac\alpha4 \int_0^{\infty} g(s) \| \eta(s) \|^2_1ds
- \alpha\int_0^{\infty} g(s)\l \eta(s),u\r_1 ds.
\end{align*}
The time-derivative of $\Psi_2$, in light of \eqref{eta}, reads
$$
\frac{d}{dt}\Psi_2 = -\frac\alpha2\int_0^{\infty} g(s) \| \eta(s) \|^2_1ds + \alpha \int_0^{\infty} g(s)\l \eta(s), u \r_1 ds.
$$
Here, an integration by parts is performed. Similarly to the previous cases,
as we are working with the approximated problem, the boundary terms vanish.
Summing the differential inequalities for $\Psi_1$ and $\Psi_2$, we finally get
$$\frac{d}{dt} \Psi+\frac{\gamma-\kappa}{2\alpha}\| \partial_t u + \alpha u \|^2_1+\frac12 \|\partial_{tt}u + \alpha \partial_t u\|^2
+\frac\alpha4 \int_0^{\infty} g(s) \| \eta(s) \|^2_1ds
\leq C\|\partial_t u\|_1^2.
$$
On account of \eqref{INNE}, and recalling that $\|\cdot\|_{*}$ defined in~\eqref{equinorma}
is an equivalent norm on $\H$, the desired conclusion~\eqref{Psi-ineq} follows by adding the term
$\|\partial_t u\|_1^2+\|\eta\|_\M^2$ to both sides of the inequality.
\end{proof}

We can now proceed to the proof of the theorem.

\begin{proof}[Proof of Theorem \ref{EXPrho}]
For $\varepsilon>0$, we define the functional
$$\Lambda(t)=\F_{\varrho}(t) + \varepsilon \Psi(t).$$
Up to choosing $\varepsilon$ small enough, we deduce from~\eqref{Psi-bound}, together with
the first inequality in~\eqref{INNE}, the bounds
\begin{equation}
\label{BNDS}
\varepsilon \E_{\varrho}(t)\leq \Lambda(t) \leq 2\F_\varrho(t)+\E_{\varrho}(0) \e^{-\omega_g t}+\Theta(t).
\end{equation}
Besides, by possibly further reducing $\varepsilon$,
collecting~\eqref{diss_estimate} and~\eqref{Psi-ineq} we obtain
$$\frac{d}{dt}\Lambda(t)+\varepsilon\vartheta\F_{\varrho}(t)\leq 0.
$$
We remark that $\varepsilon$ has been chosen independently of $\varrho$ and $u(t)$.
Leaning again on~\eqref{INNE},
we can find $\mu>0$ such that, for every $\omega>0$ small,
$$\frac{d}{dt}\Lambda(t)+2\omega\F_{\varrho}(t)\leq -\mu\|u(t)\|^2_1.
$$
At this point, we take advantage of~\eqref{BNDS}, arriving at the inequality
$$\frac{d}{dt}\Lambda(t)+\omega\Lambda(t)\leq \omega\E_{\varrho}(0) \e^{-\omega_g t}
+\omega\Theta(t) -\mu\|u(t)\|^2_1.
$$
Taking $\omega<\omega_g$,
an application of the Gronwall lemma, along with the first inequality in~\eqref{BNDS}, yield
$$
\varepsilon \E_{\varrho}(t)\leq \Lambda(t) \leq \Big[\Lambda(0)
+\frac{\omega}{\omega_g-\omega}\E_{\varrho}(0)
+  {\mathfrak I}(t)\Big] \e^{-\omega t},
$$
where
$${\mathfrak I}(t)= \omega \int_0^t \e^{\omega s} \Theta(s) ds -\mu \int_0^t \e^{\omega s} \|u(s)\|^2_1 ds.$$
Observe that, exchanging the order of integration,
$$
\int_0^t \e^{\omega s} \Theta(s) ds
=\int_0^t \e^{\omega_g s}  \|u(s)\|^2_1 \bigg( \int_s^{t} \e^{-(\omega_g-\omega) y} dy \bigg)ds
\leq \frac{1}{\omega_g - \omega} \int_0^t \e^{\omega s} \|u(s)\|^2_1 ds.
$$
Accordingly, choosing $\omega$ small enough that $\omega\leq \mu(\omega_g-\omega)$,
we have that ${\mathfrak I}(t)\leq 0$. Hence, we end up with the energy inequality
$$
\E_{\varrho}(t)\leq \frac1\varepsilon\Lambda(t) \leq
\frac1\varepsilon\big[\Lambda(0)+\mu\E_{\varrho}(0)\big]\e^{-\omega t}.
$$
Noting that $\Lambda(0)$ is bounded
by a multiple of $\E_{\varrho}(0)$, due to \eqref{BNDS} and \eqref{INNE2},
the conclusion follows.
\end{proof}

\section{Proofs of the Main Results}
\label{SAPMR}

\noindent
We have now all the ingredients to prove Theorems~\ref{EU} and~\ref{EXP}.
To this aim, for an arbitrarily given vector
$(u_0,v_0,w_0)\in \H$ and any $\varrho>0$, we denote by
$u_{\varrho }$ the solution to the approximating problem~\eqref{volterra_rho}.

\smallskip
\noindent
{\bf I.}
We claim that, for every $T>0$, the family $z_\varrho=(u_\varrho,\partial_t u_\varrho,\partial_{tt} u_\varrho)$ is Cauchy in $\C([0,T],\H)$
as $\varrho\to 0$. Indeed, take $\varrho_1\leq\varrho_2$, and consider the corresponding
solutions $u_{\varrho_1}$ and $u_{\varrho_2}$
to~\eqref{volterra_rho}.
Observe that the two forcing terms in the equations are different, namely,
$\Q_{\varrho_1}(t)Au_0$ and
$\Q_{\varrho_2}(t)Au_0$.
Setting
$$\bar{u}(t)=u_{\varrho_2}(t)-u_{\varrho_1}(t),$$
we define
$$
{\bar\eta}^t(s) = \begin{cases} \bar{u}(t)-\bar{u}(t-s) &s \leq t,\\
\bar{u}(t)+[\q_{\varrho_2}(s-t)-\q_{\varrho_1}(s-t)]u_0 &s > t.
\end{cases}
$$
Then $\bar{u}$
solves the analogue of equation~\eqref{voltapprox}, i.e.,
$$
\partial_{ttt} \bar u(t) + \alpha \partial_{tt} \bar u(t) + \beta A\partial_{t} \bar u(t)+ (\gamma-\kappa) A\bar u(t)
+ \int_{0}^{\infty}g(s)A{\bar\eta}^t(s) ds = 0,
$$
but with null initial conditions
$$\bar{u}(0)=\partial_t \bar{u}(0)=\partial_{tt}\bar{u}(0)=0.
$$
At this point, defining the energy of the system above as
$$\E_{\varrho_1,\varrho_2}(t)=\|\bar{u}(t)\|^2_1+\|\partial_t \bar{u}(t)\|^2_1
+\|\partial_{tt}\bar{u}(t) \|^2 + \|\bar{\eta}^t\|^2_\M,$$
the very same computations of Section~\ref{SAPEI} lead us to
an inequality completely analogous to~\eqref{Ebdd},
that is,
$$
\E_{\varrho_1,\varrho_2}(t) \leq C\E_{\varrho_1,\varrho_2}(0)=C\|\bar{\eta}^0\|^2_\M,
$$
for some $C\geq 1$ independent of $\varrho_1,\varrho_2$.
Since
$\bar{\eta}^0$
is supported on the interval $[\varrho_1/2,\varrho_2]$ and $|\q_{\varrho_2}-\q_{\varrho_1}|\leq 1$,
$$\|\bar{\eta}^0\|_{\M}^2=\int_{\varrho_1/2}^{\varrho_2}-g'(s)[\q_{\varrho_2}(s)-\q_{\varrho_1}(s)]^2\| u_0\|_1^2 ds\leq
\|u_0\|_1^2 \big[g(\varrho_1/2)-g(\varrho_2)\big]\to 0$$
as $\varrho_1,\varrho_2\to 0$.
This establishes the claim.
\qed

\smallskip
\noindent
{\bf II.}
As a consequence, for every $T>0$, we have the convergence $z_\varrho\to z$ in $\C([0,T],\H)$ as $\varrho\to 0$, for some
vector $z=(u,\partial_t u,\partial_{tt} u)$. The next step is showing that
$u$ is a solution to the original Cauchy problem \eqref{volterra}-\eqref{IC}.
The desired initial conditions are trivially satisfied, so we are left to verify
that \eqref{volterra} holds true in the weak sense.
Since $u_\varrho$ solves the differential equation \eqref{volterra_rho}, the result follows by the convergence
$$\lim_{\varrho\to 0}\Q_{\varrho}(t) \l u_0, \zeta \r_1=0,$$
for any given $\zeta \in H^1$ and every $t\in[0,T]$, clearly implied by the bound \eqref{Qbound}.

\smallskip
\noindent
{\bf III.}
Exploiting the representation formula~\eqref{eta_repr}, we now write the energy
$\E_\varrho(t)$ of $u_\varrho$, defined in~\eqref{energyrho},
in the equivalent form
\begin{align*}
\E_\varrho(t)&=\|u_\varrho(t)\|^2_1+\|\partial_t u_\varrho(t)\|^2_1+\|\partial_{tt}u_\varrho(t)\|^2 \\
&\quad+ \int_{0}^{t}-g'(s)\|u_\varrho(t-s)-u_\varrho(t)\|_1^2 ds
+\int_{t}^{\infty}-g'(s)\|u_\varrho(t)+[\q_\varrho(s-t)-1]u_0\|_1^2 ds.
\end{align*}
The convergence $z_\varrho\to z$ in $\C([0,T],\H)$, for every $T>0$, entails that
$$\lim_{\varrho\to 0}\E_\varrho(t)=\E(t)+g(t)\|u(t)\|_1^2,\quad\forall t\geq 0,$$
where $\E(t)$ is the energy of the original problem \eqref{volterra}-\eqref{IC},
defined in~\eqref{energy}.
Passing to the limit
in the energy inequality \eqref{Ebdd}, we readily obtain~\eqref{BDD}
(recall that $g(0)<\infty$). This finishes the proof of Theorem~\ref{EU}.
By the same token, when~\eqref{gexp} holds,
the exponential decay estimate~\eqref{DECAYrho} for $\E_\varrho(t)$ of Theorem~\ref{EXPrho}
yields in the limit \eqref{DECAY} with
$$M=M_0[1+g(0)].$$
The proof of  Theorem~\ref{EXP} is done.
\qed

\bigskip

\bigskip

\end{document}